\numberwithin{equation}{section}
\newtheorem{thm}{Theorem}[section]
\newtheorem{cor}[thm]{Corollary}
\newtheorem{lem}[thm]{Lemma}
\newtheorem{defn}[thm]{Definition}
\newtheorem{prop}[thm]{Proposition}
\theoremstyle{remark}
\newtheorem{rem}{Remark}
\newcommand{\SO}{\mathrm{SO}}
\newcommand{\dist}{\mathrm{dist}}
\newcommand{\supp}{\mathrm{supp\,}}
\newcommand{\real}{\mathrm{Re\,}}
\newcommand{\imag}{\mathrm{Im\,}}
\newcommand{\sgn}{\mathrm{sgn}}
\newcommand{\p}{\partial}
\renewcommand{\div}{\mathrm{div\,}}
\begin{document}

\title[Resolvent estimates]{Resolvent estimates for the Lam\'e operator \\
and failure of Carleman estimates 
}

\author[Y. Kwon]{Yehyun Kwon}
\author[S. Lee]{Sanghyuk Lee}
\author[I. Seo]{Ihyeok Seo}

\address{(Yehyun Kwon) School of Mathematics, Korea Institute for Advanced Study,  Seoul 02455, Republic of Korea}
\email{yhkwon@kias.re.kr}
\address{(Sanghyuk Lee) Department of Mathematical Sciences and RIM, Seoul National University, Seoul 08826, Republic of Korea}
\email{shklee@snu.ac.kr}
\address{(Ihyeok Seo) Department of Mathematics, Sungkyunkwan University, Suwon 16419, Republic of Korea
}
\email{ihseo@skku.edu}

\subjclass[2010]{35B45, 42B15, 47A10, 74B05} \keywords{Lam\'e operator, resolvent estimate}

\begin{abstract}
In this paper, we consider  the Lam\'e operator $-\Delta^\ast$ and study  resolvent estimate, uniform Sobolev estimate, and Carleman  estimate for $-\Delta^\ast$. First, we obtain sharp $L^p$--$L^q$ resolvent estimates for $-\Delta^\ast$ for admissible $p,q$.   This extends the particular case $q=\frac p{p-1}$ due to Barcel\'o et al. \cite{BFPRV} and Cossetti \cite{Co19}.  Secondly, we show failure of uniform Sobolev estimate and Carleman estimate for $-\Delta^\ast$. For the purpose we  directly analyze the Fourier multiplier of the resolvent. This allows us to prove not only the upper bound but also  the lower bound on the resolvent, so we get  the sharp $L^p$--$L^q$ bounds for the resolvent of $-\Delta^\ast$. Strikingly, the relevant  uniform Sobolev and Carleman estimates turn out to be false for the Lam\'e operator $-\Delta^\ast$ even though  the uniform resolvent estimates for $-\Delta^\ast$ are valid for certain range of $p, q$. This contrasts with the classical result regarding the Laplacian  $\Delta$ due to  Kenig, Ruiz, and Sogge \cite{KRS87} in which  the uniform resolvent estimate plays crucial role in proving the uniform Sobolev and Carleman  estimates for $\Delta$. We also describe locations of the $L^q$-eigenvalues of $-\Delta^\ast+V$ with complex potential $V$ by making use of the sharp $L^p$--$L^q$ resolvent estimates for $-\Delta^\ast$. 
\end{abstract}
\maketitle

\section{Introduction}
Let $-\Delta^\ast$ be the Lam\'e operator acting on $\mathcal S(\mathbb R^d)^d$ which is given  by
\[	-\Delta^\ast u  :=-\mu \Delta u -(\lambda+\mu) \nabla \div u, \quad u \in \mathcal S(\mathbb R^d)^d. \]
Here, the Lam\'e coefficients $\lambda$ and $\mu$ are real numbers satisfying
\[	\mu>0, \quad \lambda+2\mu>0, \]
and $\mathcal S(\mathbb R^d)^d$ denotes the space of all $d$-tuples of complex-valued Schwartz functions  on $\mathbb R^d$. When $d=3$, the Lam\'e operator has significant role in describing a linear homogeneous and isotropic elastic medium, and  in such case $u$ denotes the displacement field of the medium. For more about  physical and mathematical backgrounds of the operator, see, for example, \cite[pp. 1023--1033]{Gr94}, \cite{LL}, \cite{MH83}, and \cite{NH}.

In this paper, we are concerned with the following  $L^p$--$L^q$ resolvent estimates for $-\Delta^\ast$;
\begin{equation}\label{e:resol2}
	\|(-\Delta^\ast-z)^{-1}f \|_{L^q(\mathbb R^d)^d}\le C\kappa_{p,q}(z)\|f\|_{L^p(\mathbb R^d)^d}, \quad \forall f\in \mathcal S(\mathbb R^d)^d,
\end{equation}
for admissible pairs of $p,q$ and spectral parameters $z$ in the resolvent set $\rho(-\Delta^\ast):=\mathbb C\setminus \sigma(-\Delta^\ast)=\mathbb C\setminus [0,\infty)$. Here, $\kappa_{p,q}\colon \rho(-\Delta^\ast)\to \mathbb R$ is a positive function and $C$ is a constant independent of $z\in \rho(-\Delta^\ast)$.  Moreover, we also show the sharpness of the bound $\kappa_{p,q}(z)$ in  \eqref{e:resol2} up to a multiplicative constant.

As is to be seen below, the bound in \eqref{e:resol2} not only  superficially  resembles  the $L^p$--$L^q$ resolvent estimate for the Laplacian $-\Delta$ but also  share  similar characteristics with that of the Laplacian. So, we begin with a brief discussion of the resolvent estimates for $-\Delta$ and relevant previous results including  the uniform Sobolev and Carleman estimates.

\subsubsection*{Resolvent estimate for the Laplacian} 
The $L^p$--$L^q$ resolvent estimate for the Laplacian $-\Delta$ is referred to the following form of  \emph{a priori} inequality
\begin{equation}\label{e:laplace}
	\|(-\Delta-z)^{-1}\|_{p\to q}\le C\kappa_{p,q}(z), 
\end{equation}
where $\|T\|_{p\to q}$ denotes
\begin{equation}\label{o-norm}
\|T\|_{p\to q}:=\inf \big\{K\colon \|Tf\|_{L^q(\mathbb R^d)}\le K\|f\|_{L^p(\mathbb R^d)}, \, \forall f\in \mathcal S(\mathbb R^d) \big\},
\end{equation}
$\kappa_{p,q} $ is a positive function defined on the resolvent set  $\rho(-\Delta):=\mathbb C\setminus \sigma(-\Delta)=\mathbb C\setminus [0,\infty)$, and $C=C_{p,q,d}$ is a constant independent of $z\in\rho(-\Delta)$.  The first result on \eqref{e:laplace} goes back to the seminal work of Kenig, Ruiz, and Sogge \cite{KRS87} in which they used the estimate for study of unique continuation. They proved that the estimate \eqref{e:laplace} holds with $\kappa_{p,q}(z)\equiv 1$  if and only if  $\frac1p-\frac1q=\frac2d$ and $\frac{2d}{d+3}<p<\frac{2d}{d+1}$, which is equivalent to the condition that $(\frac1p,\frac1q)\in(A,A')$. See Figures \ref{figthm2} and \ref{figthm}, and Definition \ref{defi} below. Later, the range of $p,q$ was extended by Guti\'{e}rrez \cite{Gu04} to $\frac{2}{d+1}\le\frac1p-\frac1q\le \frac2d$,  $\frac1p>\frac {d+1}{2d}$ and  $\frac1q<\frac{d-1}{2d}$ (the range $\mathcal R_1$). In this range, the bound takes the form $\kappa_{p,q}(z)=|z|^{-1+\frac d2(\frac 1p-\frac 1q)}$, and the bound is independent of the distance between $z$ and the spectrum $\sigma(-\Delta)$.

Recently, two of the authors  \cite{KL19} extended the resolvent estimate outside the uniform boundedness range. More precisely, they proved \eqref{e:laplace} for general pairs of $(\frac1p,\frac1q)\in\mathcal R_1\cup \widetilde{\mathcal R}_2\cup\widetilde{\mathcal R}_3\cup\widetilde{\mathcal R}_3'$ (see Definition \ref{defi} and Figures \ref{figthm2} and \ref{figthm} below for precise description) with
\begin{equation}\label{e:resol-bound}
	\kappa_{p,q}(z) := |z|^{-1+\frac d2(\frac 1p-\frac 1q)}  \dist(z/|z|,[0,\infty))^{-\gamma_{p,q}}
\end{equation}
and 
\begin{equation}\label{e:bounds}
	\gamma_{p,q} : =\max \left\{ 0, ~ 1-\frac{d+1}2\Big(\frac 1p-\frac 1q \Big),~ \frac{d+1}2-\frac dp, ~ \frac dq-\frac{d-1}{2} \right\}.
\end{equation}
We also refer to \cite[Proposition 2.1]{Fr18} for $L^p$--$L^{p'}$ estimates (in a dual form) which can be deduced by interpolation between the trivial $L^2$--$L^2$ estimate and the $L^\frac{2(d+1)}{d+3}$--$L^\frac{2(d+1)}{d-1}$ estimate due to Kenig, Ruiz, and Sogge \cite{KRS87}.  Outside the  range $\mathcal R_1$, the bound $\kappa_{p,q}(z)$ depends  not only on $|z|$ but also on $\dist (z,[0,\infty))$,  and it exhibits singular behavior as the spectral parameter $z$ approaches to the spectrum $\sigma(-\Delta)=[0,\infty)$.  Moreover,  it is also proven in \cite{KL19} that the estimate \eqref{e:laplace} is sharp in the sense that the inequality is reversed when $C$ is replaced with some smaller constant. The sharp resolvent estimate \eqref{e:laplace} was used to  characterize various profiles of the spectral region which contains $L^q$-eigenvalues of the non-self-adjoint operators $-\Delta+V$ with complex-valued potential $V$.

\subsubsection*{Uniform Sobolev inequality and Carleman estimate for $-\Delta$}
In \cite{KRS87}, making use of the uniform resolvent estimate, the authors established the uniform Sobolev inequality 
\begin{equation}\label{e:unif-lapl}
	\|u\|_{L^q(\mathbb R^d)}\le C\|(-\Delta +a\cdot\nabla +b)u\|_{L^p(\mathbb R^d)}
\end{equation}
with $C$ independent of $(a,b)\in\mathbb C^d\times \mathbb C$ whenever $(\frac1p,\frac1q)\in(A,A')$. This immediately gives the following type of Carleman estimate
\begin{equation}\label{e:car-lapl}
	\|u\|_{L^q(\mathbb R^d)}\le C\|e^{v\cdot x}(-\Delta)e^{-v\cdot x}u\|_{L^p(\mathbb R^d)}
\end{equation}
with $C$ independent of $v\in\mathbb R^d$ for the same range of $p,q$.  This type of Carleman estimate \eqref{e:car-lapl} was  used to obtain unique continuation property of the differential inequality $|\Delta u|\le |Vu|$ for $V\in L^\frac d2(\mathbb R^d)$ (see \cite[Theorem 3.1 and Corollaries 3.1 and 3.2]{KRS87}). Also, see \cite{JKL16, JKL18, CS90, S14, S16} for related results.

In \cite{JKL18} it was shown that, for $d\ge 3$ and $1<p,q<\infty$, the Carleman estimate \eqref{e:car-lapl} holds if and only if 
\begin{equation}\label{e:car-iff}
	\frac 1p-\frac 1q=\frac 2d,   \quad  \frac{d^2-4}{2d(d-1)}  \le \frac1p \le \frac{d+2}{2(d-1)}.	
\end{equation}
The range of $p,q$ in \eqref{e:car-iff}, which properly contains that of $(1/p,1/q)\in (A,A')$, is optimal for the Carleman estimate \eqref{e:car-lapl} when $d\ge 4$ (\cite[Theorem 1.1]{JKL18}). This exhibits different natures of the uniform resolvent estimates and the Carleman estimates.  Such difference in the boundedness is attributed to different size of sets which  carry singularities of the relevant Fourier multipliers. See  \cite{JKL18} for more details.

\subsubsection*{Notations} 
In order to facilitate the statement of our results, we introduce some notations. We use the following norms in the vector-valued setting: For a vector-valued function $u = (u_1, \ldots ,u_d)$ let us set
\[	\|u\|_{L^q(\mathbb R^d)^d} 
	= 
	\begin{cases}
		\big(\sum_{j=1}^d\|u_j\|_{L^q(\mathbb R^d)}^q\big)^{\frac1q}, & 1\le q<\infty, \\
		\max_{1\le j\le d}\|u_j\|_{L^\infty(\mathbb R^d)}, & q=\infty,
	\end{cases}
\]
and define the Lorentz norm $\|u\|_{L^{q,r}(\mathbb R^d)^d}$ similarly. For $T=(-\Delta^\ast-z)^{-1}$ we define $\|T\|_{p\to q}$ in the same way as in \eqref{o-norm} replacing $L^p(\mathbb R^d)$ with $L^p(\mathbb R^d)^d$. Next, we recall the notations from \cite{KL19}. We record them below for the reader's convenience and to make this article self-contained.  For the cases $d=3,4$, referring to Figures \ref{figthm2} and \ref{figthm}  can be helpful for the reader to follow the definitions and notations below (see \cite[Figs. 3 and 4]{KL19} for $d=2$ and $d\ge 5$). Also, the interested readers are encouraged to refer to \cite{KL19} for details regarding  $P_\circ$, $P_\ast$ and the regions $\widetilde{\mathcal R}_2$, $\widetilde{\mathcal R}_3$ which are defined below.

\begin{figure}
\captionsetup{type=figure,font=footnotesize}
\begin{minipage}[b]{0.5\textwidth}
\centering
\begin{tikzpicture} [scale=0.6]\scriptsize
	\path [fill=lightgray] (0,0)--(3,3)--(4, 3)--(5,5)--(7,6)--(7,7)--(10,10)--(10,10-20/3)--(20/3,0)--(0,0);
	\draw [<->] (0,10.7)node[left]{$y$}--(0,0) node[below]{$(0,0)$}--(10.7,0) node[right]{$x$};
	\draw (0,10) --(10,10)--(10,0) node[below]{$(1,0)$};
	\draw (10/3,10/3)node[above]{$D$}--(20/3,5/3)node[above]{$B$}--(10-5/3,10-20/3)node[left]{$B'$}--(10-10/3,10-10/3)node[left]{$D'$};
	\draw (0,0)--(10/3,10/3);
	\draw [dash pattern={on 2pt off 1pt}] (10/3,10/3)--(10-10/3,10-10/3);
	\draw (10-10/3,10-10/3)--(10,10);
	\draw (20/3,0)--(10,10-20/3);
	\draw [dash pattern={on 2pt off 1pt}]  (0,5)node[left]{$\frac12$}--(5,5)node[above]{$H$}--(5,0)node[below]{$\frac12$}; 
	\draw [dash pattern={on 2pt off 1pt}] (20/3, 5/3)--(20/3, 0)node[below]{$A$};
	\draw [dash pattern={on 2pt off 1pt}] (10-5/3, 10-20/3)--(10, 10-20/3)node[right]{$A'$};
	\draw [dash pattern={on 2pt off 1pt}] (4,3)node[right] {$P_\circ$}--(5,5)node[above]{$H$}--(7,6)node[below]{$P_\circ'$}--(7,7)node[above]{$P_\ast'$};
	\draw (6.2, 3.8) node{$\widetilde{\mathcal R}_2$};
	\draw (30/8, 3/2) node{$\widetilde{\mathcal R}_3$};
	\draw (10-3/2, 50/8) node{$\widetilde{\mathcal R}_3'$};
	\draw (7.9, 2.1) node{$\mathcal R_1$};
	\draw [dash pattern={on 2pt off 1pt}] (3,3)node[left]{$P_*$}--(4,3);
\end{tikzpicture}\caption{The case $d=3$}\label{figthm2}
\end{minipage}\hfill
\begin{minipage}[b]{0.5\textwidth}
\centering
\begin{tikzpicture} [scale=0.6]\scriptsize
	\path [fill=lightgray] (0,0)--(25/7, 25/7)--(110/26,90/26)--(5,5)--(10-90/26,10-110/26)--(10-25/7,10-25/7)--(10,10)--(10,5)--(5,0)--(0,0);
	\draw [<->] (0,10.7)node[left]{$y$}--(0,0) node[below]{$(0,0)$}--(10.7,0) node[right]{$x$};
	\draw (0,10) --(10,10)--(10,0) node[below]{$(1,0)$};
	\draw (30/8,30/8)node[above]{$D$}--(25/4,9/4)node[above]{$B$}--(10-9/4,10-25/4)node[left]{$B'$}--(10-30/8,10-30/8)node[left]{$D'$};
	\draw (0,0)--(30/8,30/8);
	\draw [dash pattern={on 2pt off 1pt}] (30/8,30/8)--(10-30/8,10-30/8);
	\draw (10-30/8,10-30/8)--(10,10);
	\draw (5,0)--(10,5);
	\draw [dash pattern={on 2pt off 1pt}]  (0,5)node[left]{$\frac12$}--(5,5)node[above]{$H$}--(5,0)node[below]{$\frac12$}; 
	\draw [dash pattern={on 2pt off 1pt}] (50/8, 9/4)--(50/8, 5/4)node[below]{$A$}--(50/8,0)node[below]{$E$};
	\draw [dash pattern={on 2pt off 1pt}] (10-9/4,10-50/8)--(10-5/4, 10-50/8)node[right]{$A'$}--(10,10-50/8)node[right]{$E'$};
	\draw [dash pattern={on 2pt off 1pt}] (110/26, 90/26)node[below] {$P_\circ$}--(5,5)node[above]{$H$}--(10-90/26,10-110/26)node[right]{$P_\circ'$}--(10-50/14,10-50/14)node[above]{$P_\ast'$};
	\draw (6, 4) node{$\widetilde{\mathcal R}_2$};
	\draw (30/8, 1.8) node{$\widetilde{\mathcal R}_3$};
	\draw (8.2, 50/8) node{$\widetilde{\mathcal R}_3'$};
	\draw (7.25, 2.75) node{$\mathcal R_1$};
	\draw [dash pattern={on 2pt off 1pt}] (50/14,50/14)node[left]{$P_*$}--(110/26, 90/26);
\end{tikzpicture}\caption{The case $d=4$}\label{figthm}
\end{minipage}
\end{figure}

\begin{defn}\label{defi}
Let $I^2 =\{(x,y)\in\mathbb R^2\colon 0\le x,y\le 1\}$. For each $(x,y) \in I^2$ we set $(x,y)' =(1-y, 1-x)$. Similarly, for $\mathcal R\subset I^2$ we define $\mathcal R'\subset I^2$ by $\mathcal R' =\{ (x,y)\in I^2\colon (x,y)' \in \mathcal R \}$. For $X_1, \ldots, X_m\in I^2$, we denote by $[X_1, \cdots, X_m]$ the convex hull of  the points $X_1, \ldots, X_m$. In particular, if  $X, Y\in I^2$,  $[X,Y]$  denotes  the closed line segment  connecting $X$ and $Y$ in $I^2$. We also denote by $(X,Y)$ and $[X,Y)$  the open interval $[X,Y]\setminus\{X,Y\}$ and the half-open interval $[X,Y]\setminus\{Y\}$, respectively. 
\begin{itemize}[leftmargin=*]
\item \emph{\emph{\bf Points and lines in $I^2$ :}} 
Let $\mathcal L=\{(x,y)\in I^2\colon y=\frac{d-1}{d+1}(1-x)\}$.  For $d\ge3$ let us denote by $A\in I^2$ the intersection of the lines $\mathcal L$ and $x-y=\frac2d$, that is, $A=(\frac{d+1}{2d}, \frac{d-3}{2d})$.  For $d\ge 2$ let $B\in I^2$ be the intersection of the lines $\mathcal L$ and $x-y=\frac2{d+1}$, that is, $B=(\frac{d+1}{2d}, \frac{(d-1)^2}{2d(d+1)})$.\footnote{Consequently, the line segments $[A,A']$ and $[B,B']$ are contained in the lines $x-y=\frac{2}{d}$ and $x-y=\frac 2{d+1}$, respectively.} The point $D=(\frac{d-1}{2d}, \frac{d-1}{2d})$ is the intersection of the diagonal $x=y$ and $\mathcal L$, and the point $E=(\frac{d+1}{2d},0)$ is the projection of $A$ (or $B$) onto the $x$-axis. Also, we set $H=(\frac12, \frac12)$, and define the points $P_\ast=(\frac1{p_\ast},\frac1{p_\ast})$ and $P_\circ=(\frac1{p_\circ}, \frac1{q_\circ})$ by\footnote{The number $p_\ast$ relates to the range of  the oscillatory integral of Carleson--Sj\"olin type with elliptic phase (\cite[Theorem 1.2]{GHI}). Being combined with Tao's bilinear restriction theorem (\cite{Ta03}) and the bilinear  argument in \cite{CKLS, Lee04}, this is one of main ingredients for the results in \cite[Theorem 1.4]{KL19}. The point $P_\circ$ is the intersection of $\mathcal L$ and the line connecting $P_\ast$ and $(\frac12, \frac{d}{2(d+2)})$; see \cite[Sections 2 and 3]{KL19} for details. If $d=2$ then $P_\ast=P_\circ=D=(1/4,1/4)$.} 
	\begin{align*}
	\frac1{p_*} := 
		\begin{cases}
			\frac{3(d-1)}{2(3d+1)}
				&\!\!\!\text{ if } d \text{ is odd,} \\
		 	\frac{3d-2}{2(3d+2)}
				&\!\!\!\text{ if } d \text{ is even,}
		\end{cases}    
	\quad  
	\Big(\frac1{p_\circ}, \frac1{q_\circ}\Big) :=
		\begin{cases}
			\big( \frac{(d+5)(d-1)}{2(d^2+4d-1)},\, \frac{(d-1)(d+3)}{2(d^2+4d-1)} \big) 
				&\!\!\!\text{ if } d \text{ is odd,} \\
			\big( \frac{d^2+3d-6}{2(d^2+3d-2)}, \, \frac{(d-1)(d+2)}{2(d^2+3d-2)} \big)    
				&\!\!\!\text{ if } d \text{ is even.}
		\end{cases}
	\end{align*} 
\item \emph{\bf{Regions in $I^2$} :} $\widetilde{\mathcal R}_2=[B,B',P_\circ ', H, P_\circ]\setminus\big( [P_\circ, H) \cup [P_\circ ', H)\cup [B, B'] \big)$, 
	\begin{align*}
	\mathcal R_1
	&=	\begin{cases}
			[B,E,B',E',(1,0)]\setminus ([B,E]\cup [B',E']\cup\{(1,0)\}) &\text{ if } d=2,\\
			[A,B,A',B']\setminus ([A,B]\cup[A',B']) &\text{ if } d\ge3,
		\end{cases} \\
	\widetilde{\mathcal R}_3 
	&=	\begin{cases}
			[(0,0),E,B,D]\setminus ([B,D]\cup[B,E]) &\text{ if } d=2, \\
			[(0,0), (\frac2d,0), A, B, P_\circ, P_\ast]\setminus ([A,B]\cup [B,P_\circ]\cup[P_\circ,P_\ast]\cup\{(\frac2d,0)\}) & \text{ if } d\ge3.
		\end{cases}
	\end{align*}
\item \emph{\bf{Spectral regions} : }For $p,q$ satisfying  $(\frac1p,\frac1q)\in \mathcal R_1 \cup \big( \bigcup_{i=2}^3\widetilde{\mathcal R}_i\big) \cup\widetilde{\mathcal R}_3'$, and $\ell>0$ we define the region $\mathcal Z_{p,q}(\ell)$ in the complex plane by
\[	\mathcal Z_{p,q}(\ell)= \{z\in\mathbb C\setminus [0,\infty)\colon \kappa_{p,q}(z)\le \ell\}.	\] 
\end{itemize}
\end{defn}
Various profiles of $\mathcal Z_{p,q}(\ell)$ depending on $p,q,d,$ and $\ell$ can be found in \cite{KL19}.

\subsubsection*{Resolvent estimate for the Lam\'e operator} 
If $d=1$ the Lam\'e operator is just a constant times the Laplacian, that is, $\Delta^\ast = (\lambda+2\mu)\frac{d^2}{dx^2}$. Also, in every dimension, if $\mu=\lambda+2\mu$ then $\Delta^\ast=\mu(\Delta,\ldots, \Delta)$. In these cases, the resolvent estimate \eqref{e:resol2}  trivially  follows  from the estimate \eqref{e:laplace} regarding  the Laplacian.  Hence, throughout the paper, we shall assume that $d\ge 2$ and $\mu\neq \lambda+2\mu$.

Barcel\'o, Folch-Gabayet, P\'erez-Esteva, Ruiz, and Vilela  \cite[Theorem 1.1]{BFPRV} showed that $\|(-\Delta^\ast-z)^{-1}\|_{p\to q}\lesssim |z|^{-1+\frac d2(\frac 1p-\frac 1q)}$ for $p,q$ satisfying $\frac1p+\frac1q=1$ and $\frac2{d+1}\le\frac1p-\frac1q\le\frac2d$, that is, when $(\frac1p, \frac1q)$ lies on the closed line segment of which endpoints are the midpoints of $[A,A']$ and $[B,B']$.  Also, see \cite[Theorem 2.3]{Co19}.  They utilized the Leray projection $\Pi$ defined via the Fourier transform by
\[	\widehat{\Pi f}(\xi) = \left(\widehat f(\xi)\cdot \frac{\xi}{|\xi|}\right)\frac{\xi}{|\xi|} 	\] 
to decompose the Lam\'e resolvent as follows:
\begin{equation}\label{e:helmholtz}
	(-\Delta^\ast-z)^{-1} = (-\mu\Delta-z)^{-1}(I_d-\Pi )  +(-(\lambda+2\mu)\Delta-z)^{-1} \Pi ,
\end{equation}
where $I_d$ denotes the $d\times d$ identity matrix. The estimate for $(-\Delta^\ast-z)^{-1}$ is now a simple consequence of the corresponding estimate for $(-\Delta-z)^{-1}$ and boundedness of the Riesz transforms.  Indeed, applying the known estimate \eqref{e:laplace} (\cite{KL19}) one can get the upper bounds on $\|(-\Delta^\ast-z)^{-1}\|_{p\to q}$ for $p,q$ contained in a range which is  wider than that of \cite{BFPRV, Co19}. See Theorem \ref{t:resol}.

This immediately leads us to a couple of related questions which are already known to  be true for the Laplacian. First, one may ask  \emph{whether  these bounds are sharp.}  Secondly,  in point of view of the above \eqref{e:helmholtz} \emph{it seems likely that  the uniform Sobolev  inequality  \eqref{e:usi} and Carleman estimate \eqref{e:car} are also possible} since \eqref{e:helmholtz}  and the known estimates for $(-\Delta-z)^{-1}$  give uniform resolvent estimate \eqref{e:resol2} for $p,q$ satisfying $(\frac1p,\frac1q)\in(A,A')$ (see Theorem \ref{t:resol} below).

Now we begin stating  our results on the sharp resolvent estimates for $-\Delta^\ast$. By the following proposition, we cannot expect the resolvent estimate \eqref{e:resol2} when $(\frac1p,\frac1q)$ lies outside the admissible range:
\[	\mathcal R_0:= 
	\begin{cases}
		\big\{ (x,y)\in I^2\colon 0\le x-y <1 \big\}
		   &\text{ if } ~  d=2,\\
		\big\{(x,y)\in I^2\colon 0\le x-y \le  \frac2d\big\} \setminus \big\{ \big(1, \frac{d-2}d \big) , \big(\frac 2d, 0\big) \big\}	
		   &\text{ if } ~  d\ge 3.
	\end{cases}	\]
\begin{prop}\label{p:admissible}
	Let $d\ge 2$ and let $1\le p, q\le \infty$. If $(\frac1p, \frac1q) \notin \mathcal R_0$, then  for any $z\in\mathbb C\setminus [0,\infty)$ $\|(-\Delta^\ast-z)^{-1}\|_{p\to q}=\infty$.  
\end{prop}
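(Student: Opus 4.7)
The plan is to verify unboundedness in each of three regimes whose union is $I^2\setminus\mathcal R_0$: (i) below the diagonal $\frac1p<\frac1q$; (ii) above the Sobolev line $\frac1p-\frac1q>\frac2d$; and (iii) the Hardy--Littlewood--Sobolev endpoints $(\frac1p,\frac1q)\in\{(1,\frac{d-2}d),(\frac2d,0)\}$ on the line $\frac1p-\frac1q=\frac2d$. For $d=2$, regime (ii) is empty and the two endpoints in (iii) collapse to the single point $(1,0)$.

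For (i), since $z\in\mathbb C\setminus[0,\infty)$ the matrix-valued Fourier multiplier of $(-\Delta^\ast-z)^{-1}$ is everywhere defined and not identically zero, so the operator is a non-trivial translation-invariant operator; H\"ormander's theorem on translation-invariant $L^p$--$L^q$ operators then forces $p\le q$.

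For (ii) and (iii) I would use a scale-dilation test-function argument that exploits the principal symbol of $(-\Delta^\ast-z)^{-1}$ at $|\xi|\to\infty$. The matrix multiplier
\[
m(\xi;z) = \frac{1}{\mu|\xi|^2-z}\Bigl(I_d-\frac{\xi\xi^T}{|\xi|^2}\Bigr) + \frac{1}{(\lambda+2\mu)|\xi|^2-z}\,\frac{\xi\xi^T}{|\xi|^2}
\]
has principal part $m_\infty(\xi) = \frac{1}{\mu|\xi|^2}I_d - \frac{\lambda+\mu}{\mu(\lambda+2\mu)}\cdot\frac{\xi\xi^T}{|\xi|^4}$, homogeneous of degree $-2$, whose inverse Fourier transform $K_\infty$ is (for $d\ge 3$) the classical Kelvin--Somigliana fundamental solution of the Lam\'e system, of the form $K_\infty(x) = A|x|^{-(d-2)}I_d + B|x|^{-d}\,xx^T$ with $B\ne 0$ under the standing hypothesis $\mu\ne\lambda+2\mu$; in $d=2$ it carries a $\log|x|^{-1}$ singularity. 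Hence the full Schwartz kernel of $(-\Delta^\ast-z)^{-1}$ equals $K_\infty$ plus smoother terms near $x=0$, and some entry $(K_\infty)_{ij}$ is bounded below by a positive multiple of $|x|^{-(d-2)}$ on an open cone $\Gamma$. For (ii), I test with $f_\varepsilon(x)=\phi(x/\varepsilon)e_j$ for a non-negative bump $\phi\in C_c^\infty(B_1)$: integrating the $i$-th component on $\{2\varepsilon<|x|<1\}\cap\Gamma$ yields $\|(-\Delta^\ast-z)^{-1}f_\varepsilon\|_q\gtrsim\varepsilon^{2+d/q}$ while $\|f_\varepsilon\|_p\sim\varepsilon^{d/p}$, and the ratio blows up as $\varepsilon\to 0^+$ precisely when $\frac1p-\frac1q>\frac2d$.

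For (iii) at $(\frac1p,\frac1q)=(1,\frac{d-2}d)$ I rescale to $f_\varepsilon(x)=\varepsilon^{-d}\phi(x/\varepsilon)e_j$ so that $\|f_\varepsilon\|_1=\|\phi\|_1$, and repeat the cone analysis; the marginal integrability $\int_{2\varepsilon}^{1}r^{-d}r^{d-1}\,dr\sim\log(1/\varepsilon)$ delivers $\|(-\Delta^\ast-z)^{-1}f_\varepsilon\|_{d/(d-2)}\gtrsim(\log\frac1\varepsilon)^{(d-2)/d}\to\infty$, which is the standard logarithmic failure of Hardy--Littlewood--Sobolev at $p=1$. The dual endpoint $(\frac2d,0)$ then follows from the adjoint identity $((-\Delta^\ast-z)^{-1})^\ast=(-\Delta^\ast-\bar z)^{-1}$, and the $d=2$ endpoint $(1,0)$ is handled by the same test function, now forcing $\|(-\Delta^\ast-z)^{-1}f_\varepsilon\|_\infty\gtrsim\log(1/\varepsilon)\to\infty$. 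The hard part will be to verify that the Kelvin--Somigliana kernel $K_\infty$ has a genuinely non-cancelling $|x|^{-(d-2)}$ contribution in some component $(i,j)$ on an open cone, which reduces to a short direct computation of $A$ and $B$ relying on $\mu>0$, $\lambda+2\mu>0$, and $\mu\ne\lambda+2\mu$.
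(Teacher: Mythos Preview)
Your argument is essentially correct, but it proceeds quite differently from the paper's proof for the non-trivial cases.

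For case (i) you agree with the paper: both invoke H\"ormander's theorem on translation-invariant operators.

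For case (ii), the paper works on the Fourier side: it tests the scalar $(1,1)$-entry of the multiplier against Littlewood--Paley pieces $P_kf$, rescales $\xi\to 2^k\xi$, and sends $k\to\infty$ so that the symbol converges to a nonzero limit while the right-hand side picks up the factor $2^{k(2+d/q-d/p)}$, forcing $1/p-1/q\le 2/d$. You instead work in physical space, exploiting the $|x|^{-(d-2)}$ singularity of the Kelvin--Somigliana kernel and a shrinking-bump test. Both routes are standard; yours is more geometric and ties the failure to the classical elastostatic fundamental solution, while the paper's avoids any explicit kernel computation.

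For case (iii), the paper reduces the $L^{d/2}\to L^\infty$ endpoint to the false inequality $\|(-\Delta)^{-1}f\|_\infty\lesssim\|f\|_{d/2}$ via Mikhlin's theorem (the quotient $m_z(\xi)^{-1}|\xi|^{-2}$ is a Mikhlin multiplier away from the origin), then dualizes. You instead read off the logarithmic divergence directly from the marginal $L^{d/(d-2)}$ integrability of $|x|^{-(d-2)}$. Your approach is arguably more elementary (no Mikhlin), but it requires the explicit Kelvin--Somigliana computation you flag; note in particular that for $d=2$ the off-diagonal entries of $K_\infty$ are bounded, so you must use a diagonal entry $(i=j)$ to see the $\log|x|^{-1}$ singularity.

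For the $d=2$ endpoint the paper again stays on the Fourier side, choosing $\widehat{h_\epsilon}$ supported in a dyadic annulus and directly evaluating $\real T_zh_\epsilon(0)$ as an integral over $\{1/\epsilon\le|\xi|\lesssim 1/\epsilon^2\}$, which yields the $\log(1/\epsilon)$ blow-up without ever writing down the kernel.

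Two small points to tighten in your write-up: (a) since $K_z$ is complex-valued while $K_\infty$ is real, the lower bound on the convolution should go through the real part, using $|K_z\ast f_\varepsilon|\ge |\real(K_z\ast f_\varepsilon)|$ and $\real K_z=K_\infty+\real R$; (b) the cone $\Gamma$ should be chosen so that a slightly smaller cone $\Gamma'$ satisfies $x\in\Gamma'$, $|x|>C\varepsilon$, $|y|<\varepsilon\Rightarrow x-y\in\Gamma$, which is routine.
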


In what follows, we characterize $L^p$--$L^q$ resolvent estimates for the Lam\'e operator for a large set  of the admissible $(\frac1p,\frac1q)\in \mathcal R_0$.
\begin{thm}\label{t:resol}
Let $d\ge 2$, $z\in\mathbb C\setminus [0,\infty)$, $1<p\le q< \infty$. If $(\frac1p, \frac1q) \in \mathcal R_1\cup\widetilde{\mathcal R}_2 \cup \widetilde{\mathcal R}_3 \cup\widetilde{\mathcal R}_3'$, we have
\begin{equation}\label{e:shp-resol}
	C^{-1}\kappa_{p,q}(z)\le \|(-\Delta^\ast-z)^{-1}\|_{p\to q} \le C \kappa_{p,q}(z),
\end{equation}
where the  constant $C$ may depend on $p$, $q$, $d$, $\lambda$, and $\mu$, but is independent of $z\in \mathbb C\setminus [0,\infty)$. Furthermore, we have the following  weak type and restricted weak type estimates in the critical cases:
\begin{gather}\label{e:rest-wk-type}
	\|(-\Delta^\ast-z)^{-1}f\|_{L^{q,\infty}(\mathbb R^d)^d} \lesssim \|f\|_{L^{p,1}(\mathbb R^d)^d} \quad \text{if} \quad (1/p,1/q)=B ~ \text{or} ~ B',
\end{gather}
and
\begin{gather}\label{e:wk-type}
	\|(-\Delta^\ast-z)^{-1}f\|_{L^{q,\infty}(\mathbb R^d)^d} \lesssim \|f\|_{L^p(\mathbb R^d)^d} \quad \text{if} \quad 
	\begin{cases}
		(1/p,1/q)\in (B', E'), \!\!  &\text{when} ~ d=2,\\
		(1/p,1/q)\in (B',A'], \!\! &\text{when} ~ d\ge3.
	\end{cases}
\end{gather}	
\end{thm}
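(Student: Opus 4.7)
The plan is to reduce everything, via the Helmholtz-type decomposition \eqref{e:helmholtz}, to the sharp $L^p$--$L^q$ resolvent estimates for the scalar Laplacian obtained in \cite{KL19}. The only additional input is that the Leray projection $\Pi$, being a matrix of second-order Riesz transforms, is bounded on $L^r(\mathbb R^d)^d$ and on $L^{r,s}(\mathbb R^d)^d$ for $1<r<\infty$ and $1\le s\le\infty$. Both halves of \eqref{e:shp-resol}, as well as \eqref{e:rest-wk-type} and \eqref{e:wk-type}, will follow once this reduction is organized carefully.

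\textbf{Upper bound.} I would apply \eqref{e:helmholtz} and estimate each of the two terms separately. Since $(-c\Delta-z)^{-1}=c^{-1}(-\Delta-c^{-1}z)^{-1}$ and $\kappa_{p,q}$ is homogeneous of degree $-1+\tfrac{d}{2}(\tfrac1p-\tfrac1q)$ in $|z|$ while its angular factor $\dist(z/|z|,[0,\infty))^{-\gamma_{p,q}}$ is invariant under positive dilations (cf.\ \eqref{e:resol-bound}), $\kappa_{p,q}(c^{-1}z)$ is comparable to $\kappa_{p,q}(z)$ for the fixed constants $c=\mu,\lambda+2\mu$. Thus the strong, weak, and restricted weak type estimates of \cite{KL19} transfer verbatim to $(-\mu\Delta-z)^{-1}$ and $(-(\lambda+2\mu)\Delta-z)^{-1}$, and composing with the bounded operators $I_d-\Pi$ and $\Pi$ on the source side and summing yields the upper bound in \eqref{e:shp-resol} together with \eqref{e:rest-wk-type} and \eqref{e:wk-type}.

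\textbf{Lower bound.} The idea is to transplant a scalar near-extremizer of the Laplacian resolvent into a pure gradient vector field. By the sharpness part of \cite{KL19}, for each $z$ and $(1/p,1/q)$ in the allowed region there is a scalar Schwartz $g$ with
\[
\|(-(\lambda+2\mu)\Delta-z)^{-1}g\|_{L^q(\mathbb R^d)}\gtrsim \kappa_{p,q}(z)\|g\|_{L^p(\mathbb R^d)}.
\]
Set $f:=\nabla(-\Delta)^{-1/2}g$, whose components are the Riesz transforms of $g$. Then $\widehat f(\xi)$ is parallel to $\xi$, so $\Pi f=f$ and $(I_d-\Pi)f=0$; boundedness of the Riesz transforms gives $\|f\|_{L^p(\mathbb R^d)^d}\sim\|g\|_{L^p(\mathbb R^d)}$. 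From \eqref{e:helmholtz},
\[
(-\Delta^\ast-z)^{-1}f=(-(\lambda+2\mu)\Delta-z)^{-1}f=\nabla(-\Delta)^{-1/2}(-(\lambda+2\mu)\Delta-z)^{-1}g,
\]
and one more application of Riesz boundedness yields
\[
\|(-\Delta^\ast-z)^{-1}f\|_{L^q(\mathbb R^d)^d}\sim \|(-(\lambda+2\mu)\Delta-z)^{-1}g\|_{L^q(\mathbb R^d)}\gtrsim \kappa_{p,q}(z)\|f\|_{L^p(\mathbb R^d)^d},
\]
which is the lower bound in \eqref{e:shp-resol}. A parallel construction, replacing $\nabla(-\Delta)^{-1/2}g$ by a divergence-free test field built from an extremizer of $(-\mu\Delta-z)^{-1}$, gives the same conclusion.

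\textbf{Main obstacle.} Given the deep sharp estimates of \cite{KL19}, the body of the proof is essentially routine bookkeeping. The most delicate point is to confirm that the Lorentz-space versions of the scalar Laplacian estimates at the critical points $B$, $B'$ and along the segments in \eqref{e:wk-type} are available in the precise Lorentz form required, and that $\Pi$ preserves the relevant Lorentz spaces over the full range of exponents involved — standard Calderón--Zygmund theory, but needing to be verified rather than invoked as a black box.
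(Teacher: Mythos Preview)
Your upper bound and the endpoint Lorentz arguments are essentially what the paper does: apply the Helmholtz decomposition \eqref{e:helmholtz}, use the scalar estimates from \cite{KL19}, and absorb the Riesz transforms (in $L^p$ or $L^{p,1}$) on the source side. No disagreement there.

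The lower bound, however, is handled quite differently. The paper does \emph{not} transplant a scalar near-extremizer through the Leray projection. Instead it fixes a single diagonal entry of the matrix multiplier, reduces by scaling to $z=1+i\delta$ with $0<\delta\ll 1$, passes to the imaginary part $\mathcal I_\delta$, and then proves by hand (Proposition~\ref{p:optimal}) the two lower bounds $\|\mathcal I_\delta(D)\|_{p\to q}\gtrsim \delta^{-1+\frac{d+1}{2}(\frac1p-\frac1q)}$ and $\|\mathcal I_\delta(D)\|_{p\to q}\gtrsim \delta^{\frac{d-1}{2}-\frac dq}$. The first uses a Knapp-type bump supported in a $\delta\times\sqrt\delta$ cap on the sphere; the second requires a stationary-phase expansion (Lemma~\ref{l:asymp}) and careful symbol bounds (Lemma~\ref{l:symbol}) because the relevant multiplier is not radial and Bessel asymptotics are unavailable. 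This direct analysis is considerably longer than your argument, but it yields the lower bound for \emph{all} $1\le p,q\le\infty$ (as the paper remarks after the theorem), whereas your route through $f=\nabla(-\Delta)^{-1/2}g$ needs Riesz boundedness on both sides and is confined to $1<p,q<\infty$.

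Your shortcut is correct in that range: since $\widehat f(\xi)=i\xi|\xi|^{-1}\widehat g(\xi)$ one has $\Pi f=f$, so \eqref{e:helmholtz} collapses to the single scalar resolvent; the two-sided comparability $\|f\|_r\sim\|g\|_r$ follows from $g=-\sum_jR_j f_j$ together with $\|R_j\|_{r\to r}<\infty$ for $1<r<\infty$; and the lower bound for the scalar resolvent is exactly the sharpness statement in \cite{KL19}. The only technical wrinkle is that $R_jg$ is not Schwartz, but once the upper bound is established the operator extends boundedly to $L^p$, so testing against $f\in L^p$ is legitimate. In summary: your argument proves the theorem as stated with less effort; the paper's approach is self-contained at the multiplier level and covers the endpoint exponents as well.
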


Throughout the paper, $A\lesssim B$ denotes $A\le CB$ for some constant $C>0$, and  $A\approx B$ denotes $A\lesssim B\lesssim A$. The lower bound in \eqref{e:shp-resol} also holds for all $1\le p,\,q\le \infty$. See Section \ref{sec:2} for the details. In contrast to the  case of the Laplacian (\cite{KL19}), the cases $p=1$ and $p=\infty$ are excluded in the theorem. This is due the failure of the $L^1$--$L^1$ and $L^\infty$--$L^\infty$ estimates for the Riesz transform.

\subsubsection*{Eigenvalues of $-\Delta^\ast+V$}
The sharp resolvent estimates \eqref{e:shp-resol} can be  used  to  specify  the location of eigenvalues of the perturbed Lam\'e operator $-\Delta^\ast+V$ acting in $L^q(\mathbb R^d)^d$, $1< q< \infty$, for a matrix-valued potential $V\colon \mathbb R^d\to \mathcal M_{d\times d}(\mathbb C)$. It does not seem that the Birman--Schwinger principle is applicable as in \cite{Fr11, Co19} because $q\neq2$ in general.

\begin{cor} \label{c:eigen}
Let $d\ge2$, $(\frac1p, \frac1q) \in \mathcal R_1\cup\widetilde{\mathcal R}_2 \cup \widetilde{\mathcal R}_3 \cup\widetilde{\mathcal R}_3'$, and let $C$ be the constant in \eqref{e:shp-resol}. Fix a positive number $\ell>0$ (we choose $\ell\ge1$ if $1/p-1/q=2/d$). Suppose that,  for some $t\in(0,1)$,
\begin{equation}\label{e:small_potential}
	\|V\|_{L^\frac{pq}{q-p}(\mathbb R^d)^{d^2}} \le \frac{t}{C\ell d^{1-\frac1p+\frac1q}}.
\end{equation}
If $E\in\mathbb C\setminus[0,\infty)$ is an eigenvalue of $-\Delta^\ast+V$ acting in $L^q(\mathbb R^d)^d$, then $E$ must lie in $\mathbb C\setminus \mathcal Z_{p,q}(\ell)$.
\end{cor}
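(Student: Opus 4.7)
The plan is the standard one for converting a resolvent bound into an eigenvalue exclusion region: argue by contradiction. Suppose $E\in\mathcal Z_{p,q}(\ell)\cap\big(\mathbb C\setminus[0,\infty)\big)$ is an eigenvalue of $-\Delta^\ast+V$ with a nonzero eigenfunction $u\in L^q(\mathbb R^d)^d$. Rewrite the eigenvalue equation $(-\Delta^\ast+V)u=Eu$ as $(-\Delta^\ast-E)u=-Vu$ and, since $E\in\rho(-\Delta^\ast)$, invert the free resolvent to obtain the fixed-point identity $u=-(-\Delta^\ast-E)^{-1}(Vu)$.

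Take $L^q(\mathbb R^d)^d$ norms and apply Theorem \ref{t:resol} together with the defining property $\kappa_{p,q}(E)\le\ell$ of $\mathcal Z_{p,q}(\ell)$ to get
\[
\|u\|_{L^q(\mathbb R^d)^d}\le C\,\kappa_{p,q}(E)\,\|Vu\|_{L^p(\mathbb R^d)^d}\le C\ell\,\|Vu\|_{L^p(\mathbb R^d)^d}.
\]
The next step is a vector-valued H\"older inequality that reabsorbs $Vu$ into $\|u\|_{L^q}$ at the cost of $\|V\|_{L^r}$, where $r=pq/(q-p)$ so that $\tfrac1p=\tfrac1r+\tfrac1q$. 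Writing $(Vu)_j=\sum_k V_{jk}u_k$ and applying scalar H\"older in $x$ yields $\|(Vu)_j\|_{L^p}\le\sum_k\|V_{jk}\|_{L^r}\|u_k\|_{L^q}$; the double sum over $j$ and $k$ is then controlled by the discrete H\"older inequality on $\mathbb C^d$ (combining the embedding $\ell^{r'}\hookrightarrow\ell^q$ and concavity of $x^{p/r}$, using $p\le r$, which is automatic), producing the constant $d^{1-1/p+1/q}$ and the bound
\[
\|Vu\|_{L^p(\mathbb R^d)^d}\le d^{1-\frac1p+\frac1q}\,\|V\|_{L^r(\mathbb R^d)^{d^2}}\,\|u\|_{L^q(\mathbb R^d)^d}.
\]

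Combining the two displayed inequalities with the smallness assumption \eqref{e:small_potential} gives $\|u\|_{L^q(\mathbb R^d)^d}\le t\,\|u\|_{L^q(\mathbb R^d)^d}$, and since $t\in(0,1)$ while $u\ne 0$ this is the desired contradiction, so $E\notin\mathcal Z_{p,q}(\ell)$. The main technical issue, essentially the only step that requires genuine care, is the bookkeeping in the vector-valued H\"older step to recover exactly the constant $d^{1-1/p+1/q}$ claimed in \eqref{e:small_potential}; once this is done, the rest is a one-line consequence of Theorem \ref{t:resol}. A minor functional-analytic point worth recording is the justification of the inversion step, i.e.\ that $u$ is the unique $L^q$-solution of $(-\Delta^\ast-E)v=-Vu$; this follows from ellipticity of $-\Delta^\ast-E$ on $\mathbb C\setminus[0,\infty)$ together with the $L^p\to L^q$ boundedness of $(-\Delta^\ast-E)^{-1}$ provided by Theorem \ref{t:resol}.
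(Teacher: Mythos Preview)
Your proposal is correct and follows essentially the same route as the paper's proof: argue by contradiction assuming $E\in\mathcal Z_{p,q}(\ell)$, apply the resolvent bound from Theorem~\ref{t:resol} together with $\kappa_{p,q}(E)\le\ell$, and then use a vector-valued H\"older inequality to obtain $\|Vu\|_{L^p(\mathbb R^d)^d}\le d^{1-\frac1p+\frac1q}\|V\|_{L^{pq/(q-p)}(\mathbb R^d)^{d^2}}\|u\|_{L^q(\mathbb R^d)^d}$, yielding $\|u\|_{L^q}\le t\|u\|_{L^q}$ and hence $u=0$. The only cosmetic difference is that the paper applies the resolvent estimate in its \emph{a priori} form $\|u\|_{L^q}\le C\kappa_{p,q}(E)\|(-\Delta^\ast-E)u\|_{L^p}$ rather than first inverting, and it carries out the discrete H\"older step explicitly via two applications of the power-mean inequality $(\sum_{i=1}^d a_i)^\theta\le d^{\theta-1}\sum_{i=1}^d a_i^\theta$ (with $\theta=\frac{q}{q-p}$ and then $\theta=\frac{p(q-1)}{q-p}$) to arrive at the constant $d^{1-\frac1p+\frac1q}$.
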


From the above we can deduce properties of the complex eigenvalues of the  operator $-\Delta^\ast+V$  which depend  on the potential $V$. In this regards we make a couple of remarks.

\begin{rem} If $(1/p,1/q)$ lies in the range
\[	\Big\{(x,y)\in \widetilde{\mathcal R}_2 \cup \widetilde{\mathcal R}_3 \cup \widetilde{\mathcal R}_3'\colon \frac{d-1}d < x+y < \frac{d+1}d ,~ (x,y)\neq\big(\frac12,\frac12\big)\Big\},	\]
then the region $\mathbb C\setminus\mathcal Z_{p,q}(\ell)$ is a neighborhood of $[0,\infty)$ which shrinks along the positive real line (see Figure 8(b,c,d,e) and Figure 9(e) in \cite{KL19}). Hence, for any sequence of eigenvalues $\{E_j\}$ of $-\Delta^\ast+V$ acting in $L^q(\mathbb R^d)^d$ such that $\real E_j\to\infty$, the sequence $\{\imag E_j\}$ converges to zero provided that $V$ satisfies \eqref{e:small_potential}.  Concerning analogous results for the Laplacian see \cite[p.220, Remark (1)]{Fr18} and \cite[Remark 3]{KL19}.
\end{rem}

\begin{rem}
If $1/p-1/q=2/d$ and \eqref{e:small_potential} is satisfied with some $\ell\ge1$ and $t\in(0,1)$, the region $\mathbb C\setminus\mathcal Z_{p,q}(\ell)$ is a conic region which  is contained  in the right half plane and  its apex is at the origin (see \cite[Figure 10]{KL19}). Thus, we deduce from Corollary \ref{c:eigen} that there is no eigenvalue of the operator $-\Delta^\ast+V$ acting in $L^q(\mathbb R^d)^d$ which has negative real part. We refer the reader to the recent papers \cite{CCF, Co19} for descriptions of bounds for eigenvalues of $-\Delta^\ast+V$ acting in $L^2(\mathbb R^d)^{d}$ in terms of size of $V$ measured in the Lebesgue, Morrey-Campanato, or Kerman-Sawyer spaces.
\end{rem}

\subsubsection*{Uniform Sobolev and Carleman estimates for $-\Delta^\ast$}
In view of Theorem \ref{t:resol}, the $L^p$--$L^q$ estimate for the resolvent of the Lam\'e operator displays similar behavior as that for the resolvent of the Laplacian. In particular, we have uniform estimates for the resolvent of  the Lam\'e operator when $(\frac1p,\frac1q)\in(A,A')$.  Thus, in perspective of the results in \cite{KRS87}, it is natural to expect that the following uniform Sobolev inequality holds:
\begin{equation}\label{e:usi}
	\|u\|_{L^q(\mathbb R^d)^d} \le C \|(-\Delta^\ast+M_1\nabla+M_2)u\|_{L^p(\mathbb R^d)^d},
\end{equation}
where $C$ is independent of all complex valued $(d\times d)$-matrices $M_1, M_2$. If this were true, then we could particularly deduce the following form of Carleman estimate
\begin{equation}\label{e:car}
	\|u\|_{L^q(\mathbb R^d)^d} \le C \|e^{v\cdot x}(-\Delta^\ast)e^{-v\cdot x} u\|_{L^p(\mathbb R^d)^d}
\end{equation}
with $C$ independent of $v\in\mathbb R^d\setminus\{0\}$, which would imply  the unique continuation property for the differential inequality $|\Delta^\ast u|\le |Vu|$.  As was already mentioned before, if $\lambda+2\mu=\mu$, then $-\Delta^\ast=-\mu(\Delta,\ldots, \Delta)$, and it follows from the Laplacian case (\eqref{e:car-lapl}, \eqref{e:car-iff}) that \eqref{e:car} holds if and only if $p,q$ satisfy the condition \eqref{e:car-iff}.  However, contrary to the seemingly natural expectation, the Carleman estimate \eqref{e:car} fails whenever $\lambda+2\mu\neq\mu$. Hence, the uniform Sobolev inequality \eqref{e:usi} also fails.

\begin{thm}\label{t:Carleman}
Let $d\ge 2$ and $\lambda +2\mu\neq \mu$. Then for any $1\le p,q\le \infty$, both the uniform Sobolev inequality \eqref{e:usi} and the Carleman estimate \eqref{e:car} fail.
\end{thm}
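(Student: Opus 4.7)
The plan is to reduce the failure of \eqref{e:usi} to the failure of \eqref{e:car}, and then extract the latter from the Fourier symbol of the conjugated operator. For the reduction, a direct computation shows that
\[
e^{v\cdot x}(-\Delta^\ast)e^{-v\cdot x} = -\Delta^\ast + M_1\nabla + M_2
\]
for explicit matrices $M_1,M_2$ depending linearly and quadratically on $v$ (in particular $M_2=-\mu|v|^2 I-(\lambda+\mu)vv^T$); thus \eqref{e:car} is a special case of \eqref{e:usi}, and it is enough to disprove \eqref{e:car}. A standard dilation argument with $u(x)\mapsto u(\lambda x)$ and $v\mapsto v/\lambda$ then restricts attention to pairs on the scaling line $1/p-1/q=2/d$, since off this line the two sides of \eqref{e:car} rescale differently.

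On this line, \eqref{e:car} is equivalent to the uniform-in-$v$ $L^p\to L^q$ boundedness of the matrix Fourier multiplier $L(\xi+iv)^{-1}$, where $L(\zeta)=\mu(\zeta\cdot\zeta)I+(\lambda+\mu)\zeta\otimes\zeta$. The Sherman--Morrison identity gives
\[
L(\zeta)^{-1}=\frac{1}{\mu(\zeta\cdot\zeta)}\,I-\frac{\lambda+\mu}{\mu(\lambda+2\mu)(\zeta\cdot\zeta)^2}\,\zeta\otimes\zeta,
\]
so that when $\lambda+2\mu\neq\mu$ the inverse symbol carries a \emph{second-order} singularity of type $\zeta\otimes\zeta/(\zeta\cdot\zeta)^2$ on the complex variety $\{\zeta\cdot\zeta=0\}$, strictly stronger than the simple pole of the Laplacian Carleman symbol $1/(\zeta\cdot\zeta)$. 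For $d\ge 3$ I would exploit this via a Knapp-type test function: fix $v=Re_d$ with $R\gg 1$ and let $\mathcal C$ be an annular parabolic neighborhood of $\xi^0=Re_1$ of thickness $\delta=\epsilon/R$ in the two normal directions $e_1,e_d$ and width $\sqrt{R\delta}=\sqrt\epsilon$ in each of the $d-2$ tangential directions, so that $|\mathcal C|\sim\epsilon^{(d+2)/2}R^{-2}$ and $|\zeta\cdot\zeta|\sim\epsilon$ throughout $\mathcal C$. Taking $\hat f=\chi_{\mathcal C}\,e_d$, the entries $(L^{-1})_{1d}$ and $(L^{-1})_{dd}$ of $L(\xi+iv)^{-1}e_d$ have magnitude $\sim R^2/\epsilon^2$ on $\mathcal C$ (since $|\zeta_1|,|\zeta_d|\sim R$ and the second, Lam\'e-specific term of the decomposition dominates). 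A standard wave-packet estimate then yields
\[
\frac{\|T_v^{-1}f\|_q}{\|f\|_p}\,\gtrsim\,\frac{R^2}{\epsilon^2}\,|\mathcal C|^{1/p-1/q}\,\sim\,\epsilon^{(2-d)/d}\,R^{(2d-4)/d},
\]
which diverges as $R\to\infty$ (equivalently $\epsilon\to 0$) for every $d\ge 3$, contradicting \eqref{e:car}.

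In the remaining case $d=2$ the scaling line degenerates to the single endpoint $(p,q)=(1,\infty)$ and the Knapp ratio above becomes order one, so I would argue separately via the integral kernel. Since $T_v^{-1}=e^{v\cdot x}(-\Delta^\ast)^{-1}e^{-v\cdot x}$, its Schwartz kernel is $e^{v\cdot(x-y)}K(x-y)$, where $K$ is the kernel of $(-\Delta^\ast)^{-1}$; in $d=2$ this $K$ is a nonzero mixture of a degree-zero homogeneous piece and a logarithmic piece (read off from \eqref{e:helmholtz} at $z=0$), and the prefactor $e^{v\cdot(x-y)}$ is unbounded in $v$ on $\{v\cdot(x-y)>0\}$, defeating the $L^\infty$-kernel criterion for $L^1\to L^\infty$ boundedness. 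The principal technical obstacle lies in the Knapp step for $d\ge 3$: because $1/(\eta_1+i\eta_d)^2$ varies over the cap, a rigorous lower bound for $\|T_v^{-1}f\|_q$ requires either a stationary-phase/partial-fraction analysis in the two normal variables on an annular cap where the singular factor is essentially constant, or an $L^2$-duality pairing against a carefully chosen physical-side wave-packet, to rule out destructive oscillation in the inverse Fourier transform.
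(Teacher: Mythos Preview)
For $d\ge 3$ your argument is essentially the paper's: isolate a scalar entry $\zeta_j\zeta_k/(\zeta\cdot\zeta)^2$ from the Lam\'e-specific part of $L(\xi+iv)^{-1}$ and run a Knapp test on a parabolic cap near the zero set of $\zeta\cdot\zeta$, producing the necessary condition $\frac1p-\frac1q\ge\frac{4}{d+2}$, which is incompatible with $\frac1p-\frac1q=\frac2d$ for $d\ge 3$. The parameter $R$ in $v=Re_d$ is redundant on the scaling line (rescaling by $1/R$ reduces to $R=1$ with $\epsilon$ replaced by $\epsilon/R^2$, so the ratio is really $\epsilon^{(2-d)/d}$), and the oscillation issue you flag is handled in the paper simply by taking the cap one-sided: with $\xi_1\in[\delta/2,2\delta]$ and $\xi_2-1\in[\delta/2,2\delta]$ both $\real$ and $\imag$ of $|\xi|^2-1+2i\xi_1$ are positive and comparable to $\delta$, so the phase of the multiplier is essentially constant on $\supp\widehat{h_\delta}$ and no cancellation occurs.

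The $d=2$ step contains a genuine error. The kernel identity you invoke --- that $T_v^{-1}$ has convolution kernel $e^{v\cdot z}K(z)$ with $K$ the fundamental solution of $-\Delta^\ast$ --- is false. The formal complex shift $\mathcal F^{-1}[L(\cdot+iv)^{-1}](z)=e^{v\cdot z}\mathcal F^{-1}[L^{-1}](z)$ requires $K$ to decay exponentially in the direction of $v$, whereas $K$ grows (logarithmically in $d=2$), so $e^{v\cdot z}K(z)$ is not even tempered and cannot be the kernel of a multiplier operator. Indeed, your argument applied verbatim to the scalar Laplacian would equally ``prove'' failure of \eqref{e:car-lapl} in every dimension (since $e^{v\cdot z}|z|^{2-d}$ is unbounded for $d\ge 3$), contradicting \cite{KRS87}. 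The true kernel $\mathcal F^{-1}[L(\xi+iv)^{-1}]$ is oscillatory and decays at infinity; any failure of the $L^1\to L^\infty$ bound must come from the high-frequency behavior of the symbol. The paper establishes this by an explicit test: taking $\widehat{h_\epsilon}$ supported on the annulus $\{1/(2\epsilon)\le|\xi|\le 1/\epsilon^2\}$, one computes directly that $|\real u_2(0)|\gtrsim\log(1/\epsilon)$, the divergence arising because the Lam\'e-specific term $\xi_2^2/((\xi+iv)\cdot(\xi+iv))^2$ behaves like $|\xi|^{-2}$ at high frequency, which fails to be integrable in two dimensions.
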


The failure contrasts with the previous result concerning the Carleman estimate \eqref{e:car-lapl} for the Laplacian (\cite{KRS87, JKL18}). While the relevant multiplier for the estimate \eqref{e:car-lapl} is $(|\xi|^2-1+2i\xi_1)^{-1}$,  the estimate \eqref{e:car}  implies $L^p$--$L^q$ boundedness of  the multiplier operator  given by the multiplier  $(\xi_1+i)\xi_2(|\xi|^2-1+2i\xi_1)^{-2}$ (see \eqref{e:car-lame} below). Compared with  $(|\xi|^2-1+2i\xi_1)^{-1}$ the function $(|\xi|^2-1+2i\xi_1)^{-2}$ exhibits more singular behavior near the sphere $\{0\}\times S^{d-2}:=\{\xi\colon |\xi|=1, \xi_1=0\}$ and this leads to failure of the estimate \eqref{e:car}.

We close the introduction with a couple of remarks.

\begin{rem} Concerning the \emph{strong} unique continuation property of Schr\"odinger operators, Jerison, and Kenig \cite{JK85, Je86} proved the following type of Carleman estimate
\begin{equation}\label{e:strong}
\|u\|_{L^q(\mathbb R^d)} \le C \||x|^{-\tau}\Delta |x|^{\tau}u\|_{L^p(\mathbb R^d)}
\end{equation}
with $p=\frac{2d}{d+2}$, $q=\frac{p}{p-1}$, and with $C$ independent of $\tau\in\mathbb R$ such that $\dist (\tau, \mathbb Z +\frac{d}{q})>0$. Later, this estimate was extended for general off-dual pairs of $p, q$ in \cite{St-append, So90, KL18}. In \cite[Proposition 2.5]{Wo93}, Wolff made a simple observation that \eqref{e:strong} implies \eqref{e:car-lapl}. By the same argument, we can easily deduce the false estimate \eqref{e:car} from the following type of Carleman estimate
\begin{equation}\label{e:strong-lame}
\|u\|_{L^q(\mathbb R^d)^d} \le C \||x|^{-\tau}\Delta^\ast |x|^{\tau}u\|_{L^p(\mathbb R^d)^d},
\end{equation}
where the constant $C>0$ is independent of  a sequence $\tau\to \infty$. Therefore, by Theorem \ref{t:Carleman}, we conclude that the estimate \eqref{e:strong-lame} is also impossible for any $p,q$.
\end{rem}

\begin{rem} A large body of literature (e.g., \cite{DR93,AITY98,W01,AM01,LW05,E06,E08}) is available regarding unique continuation for the Lam\'e system 
\begin{equation}\label{e:u-c-lame}
	\mu \Delta u +(\lambda+\mu) \nabla \div u +\nabla\lambda\,\div u +\big(\nabla u +(\nabla u)^t\big)\nabla\mu +\rho u =0.
\end{equation}
Among others, Lin, Nakamura, and Wang \cite{LNW10} proved the strong unique continuation property for \eqref{e:u-c-lame} whenever $\lambda, \mu\in C^{0,1}$, $\min\{\mu, \lambda+2\mu\}\ge \delta_0>0$, and $\rho\in L^\infty$. To the authors' knowledge, it seems that there is no result on unique continuation for the system when $\rho$ is unbounded, e.g., $\rho\in L^p$ for $p\neq \infty$. {From the typical viewpoint of applying Carleman estimates to unique continuation problem for unbounded potentials, $L^p$--$L^q$ Carleman estimate such as \eqref{e:car} need to be developed. However, Theorem \ref{t:Carleman} alludes negatively to the approach in this direction.}
\end{rem}

\subsubsection*{Acknowledgement} 
This work was supported by the KIAS Individual Grant and the National Research Foundation of Korea (NRF) with grant numbers MG073701 and NRF-2020R1F1A1A010735\linebreak2012 (Yehyun Kwon), NRF-2021R1A2B5B02001786 (Sanghyuk Lee), and NRF-2019R1F1A1061316 (Ihyeok Seo). The authors would like to thank Alberto Ruiz for informing us of the paper \cite{BFPRV} and the anonymous referee for very careful reading and various helpful comments.

\section{Resolvent estimates}\label{sec:2}
In this section, we prove Proposition \ref{p:admissible}, Theorem \ref{t:resol}, and Corollary \ref{c:eigen}. For the purpose we make use of the identity \eqref{e:helmholtz}.   For the sake of completeness we provide a Fourier-analytic proof of \eqref{e:helmholtz}.

For a function $f\colon \mathbb R^d\to \mathbb C$, we denote its Fourier transform by $\widehat f(\xi)=\mathcal F f(\xi)=\int_{\mathbb R^d}e^{-ix\cdot\xi}f(x)dx$, and  inverse Fourier transform by $\mathcal F^{-1}f(x)$.  If $f=(f_1,\ldots, f_d)$ is vector-valued, then we also write $\widehat{f}=(\widehat{f_1},\ldots, \widehat{f_d})$. For a matrix $A$, $[A]_{jk}$ denotes the $(j,k)$-component of $A$.

\begin{lem}\label{l:resol} 
Let $d\ge 2$ and $z\in \mathbb C\setminus [0,\infty)$. For every $f=(f_1,\ldots, f_d)\in\mathcal S(\mathbb R^d)^d$ and $1\le j\le d$, we have
\begin{equation}\label{e:lameresol}
	[(-\Delta^\ast-z)^{-1}f]_j 
	= (-\mu \Delta-z)^{-1}\sum_{k=1}^d (\delta_{jk}+R_jR_k)f_k - (-(\lambda+2\mu)\Delta-z)^{-1} \sum_{k=1}^d R_j R_k f_k, 
\end{equation}
where $\delta_{jk}$ is the Kronecker delta and  $R_j$ is the Riesz transform defined by $\widehat{R_jf_k}(\xi)=i\frac{\xi_j}{|\xi|}\widehat {f_k} (\xi)$.
\end{lem}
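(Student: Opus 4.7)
The plan is to take the Fourier transform and explicitly invert the matrix-valued symbol of $-\Delta^\ast-z$. A direct computation from the definition shows that the symbol of $-\Delta^\ast$ is the $d\times d$ matrix
\[
	M(\xi)=\mu|\xi|^2 I_d+(\lambda+\mu)\,\xi\xi^T,
\]
where $\xi$ is viewed as a column vector. Since $M(\xi)$ has eigenvalues $\mu|\xi|^2$ (with multiplicity $d-1$, acting on $\xi^\perp$) and $(\lambda+2\mu)|\xi|^2$ (with multiplicity $1$, in the direction of $\xi$), both nonnegative by the hypotheses on the Lam\'e coefficients, the matrix $M(\xi)-zI_d$ is invertible for every $\xi\in\mathbb R^d$ whenever $z\in\mathbb C\setminus[0,\infty)$. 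This makes the Fourier-side inverse a pointwise well-defined operation on $\widehat{\mathcal S(\mathbb R^d)^d}$.

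Next, I would invert $M(\xi)-zI_d=(\mu|\xi|^2-z)I_d+(\lambda+\mu)\xi\xi^T$ by splitting the identity into the orthogonal projections $P_\xi=|\xi|^{-2}\xi\xi^T$ and $I_d-P_\xi$ onto $\mathrm{span}\{\xi\}$ and its orthogonal complement, which is a little more transparent than invoking Sherman--Morrison. This gives
\[
	(M(\xi)-zI_d)^{-1}=\frac{I_d-P_\xi}{\mu|\xi|^2-z}+\frac{P_\xi}{(\lambda+2\mu)|\xi|^2-z}.
\]
An equivalent route, which I would record because it is the algebraic heart of the identity, is the partial-fraction decomposition
\[
	\frac{(\lambda+\mu)|\xi|^2}{(\mu|\xi|^2-z)((\lambda+2\mu)|\xi|^2-z)}=\frac{1}{\mu|\xi|^2-z}-\frac{1}{(\lambda+2\mu)|\xi|^2-z},
\]
which is exactly what produces the cancellation between the two scalar resolvent multipliers in \eqref{e:lameresol}.

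Finally, to return to physical space I would identify the Fourier multipliers entrywise: since $\widehat{R_jR_kf}(\xi)=-|\xi|^{-2}\xi_j\xi_k\widehat{f}(\xi)$, the symbol $\delta_{jk}-|\xi|^{-2}\xi_j\xi_k$ corresponds to the operator $\delta_{jk}\mathrm{Id}+R_jR_k$, while $|\xi|^{-2}\xi_j\xi_k$ corresponds to $-R_jR_k$. Combining these with the scalar multipliers $(\mu|\xi|^2-z)^{-1}$ and $((\lambda+2\mu)|\xi|^2-z)^{-1}$, which are the symbols of $(-\mu\Delta-z)^{-1}$ and $(-(\lambda+2\mu)\Delta-z)^{-1}$ respectively, yields \eqref{e:lameresol} componentwise after summation over $k$ and inversion of the Fourier transform. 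I do not anticipate any genuine obstacle: the argument reduces to inverting a rank-one perturbation of a scalar multiple of the identity and reading off the resulting scalar multipliers. The only mild technical point is verifying that all Fourier multiplications and inverse transforms make sense on $\mathcal S(\mathbb R^d)^d$, which is straightforward because the scalar denominators are uniformly bounded away from zero on $\mathbb R^d$ for $z\in\mathbb C\setminus[0,\infty)$.
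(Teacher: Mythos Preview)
Your proof is correct and arrives at the same formula, but by a different (and arguably cleaner) route than the paper. The paper also writes the symbol as $L_z(\xi)=(\mu|\xi|^2-z)I_d+(\lambda+\mu)\xi\xi^t$, but then inverts it by brute force via Cramer's rule: it computes $\det L_z$ and every minor $M_{jk}$ explicitly (after rotating $\xi$ to $|\xi|e_1$ to diagonalize), and reads off the entries of $L_z(\xi)^{-1}$ from the adjugate formula. You instead diagonalize $L_z(\xi)$ conceptually by decomposing $I_d=P_\xi+(I_d-P_\xi)$ into the spectral projections, which immediately gives the inverse without any determinant computation. Your approach is essentially the Helmholtz/Leray decomposition \eqref{e:helmholtz} that the paper quotes from \cite{BFPRV,Co19} in the introduction; the paper's Cramer's-rule computation is offered ``for the sake of completeness'' as an alternative, more hands-on derivation. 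What your version buys is transparency and brevity; what the paper's version buys is that it avoids appealing to the eigenstructure and is entirely mechanical, which may be preferable if one wants an argument that generalizes verbatim to symbols that are not so obviously rank-one perturbations.
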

\begin{proof}
Let us formally write $u:=(-\Delta^\ast-z)^{-1}f$ and take the Fourier transform on the system $f=(-\Delta^\ast-z)u$. Then we see that
\[	(\mu |\xi|^2-z)\widehat {u_j}(\xi) +(\lambda +\mu)\bigg(\sum_{k=1}^d \xi_k\widehat{u_k}(\xi)\bigg)\xi_j =\widehat{f_j}(\xi), \quad 1\le j\le d. \]
If we regard every $d$-dimensional vector as a $(d\times 1)$-matrix, the system of equations is written as follows: 
\begin{equation}\label{e:matrix}
	\widehat f(\xi)=\big[(\mu|\xi|^2-z)I_d + (\lambda+\mu) (\xi\xi^t)\big]\widehat u(\xi)=: L_z(\xi) \widehat u(\xi).
\end{equation}
To obtain \eqref{e:lameresol} we need to invert the matrix $L_z(\xi)$. It is sufficient to show that, for  $\xi\in\mathbb R^d\setminus \{0\}$ and  $1\le j\le d$,
\begin{equation}\label{e:mult}
	 \widehat{u_j}(\xi)=\frac{\widehat{f_j}(\xi)}{\mu|\xi|^2-z}+\sum_{k=1}^d \left(\frac1{(\lambda+2\mu)|\xi|^2-z}-\frac1{\mu|\xi|^2-z}\right)\frac{\xi_j\xi_k \widehat{f_k}(\xi)}{|\xi|^2}, 
\end{equation}
which gives \eqref{e:lameresol} by the Fourier inversion formula. 

Let $\xi\neq0$. Choosing $R\in \SO(d)$ such that $R\xi=|\xi|e_1$, it is clear that
\[	\det L_z =\det \big( RL_z R^t\big) =\det \big((\mu|\xi|^2-z)I_d+(\lambda+\mu)|\xi|^2e_1e_1^t \big)
	=((\lambda+2\mu)|\xi|^2-z)(\mu|\xi|^2-z)^{d-1}.
\] 
Writing $\xi'=(\xi_2,\ldots,\xi_d)^t$, the $(1,1)$--minor $M_{11}$ of $L_z$ can be computed in a similar manner and we get
\[	M_{11}=\det\big((\mu |\xi|^2-z)I_{d-1}+(\lambda+\mu)\xi'\xi'^t\big)= ((\lambda+2\mu)|\xi|^2-(\lambda+\mu)\xi_1^2-z)(\mu|\xi|^2-z)^{d-2}. \] 
These also can be checked without difficulty  by applying elementary column (row) operations and utilizing properties of determinant.  Similarly,
\[	M_{jj}=((\lambda+2\mu)|\xi|^2-(\lambda+\mu)\xi_j^2-z)(\mu|\xi|^2-z)^{d-2}, \quad 1\le j\le d. \]
If $j\neq k$, column (or row) operations give 
\[	M_{jk}=(-1)^{j+k-1}(\lambda+\mu) \xi_j\xi_k(\mu|\xi|^2-z)^{d-2}.	\]
By Cramer's rule we see that
\[	[L_z(\xi)^{-1}]_{jk} = \frac{(-1)^{j+k}M_{kj}}{\det L_z} 
	= \frac{\delta_{jk}}{\mu|\xi|^2-z} + \left(\frac1{(\lambda+2\mu)|\xi|^2-z} - \frac1{\mu|\xi|^2-z} \right) \frac{\xi_j\xi_k}{|\xi|^2}.	\]
Since $\widehat u (\xi)= L_z(\xi)^{-1}\widehat f(\xi)$ we have \eqref{e:mult}, from which \eqref{e:lameresol} follows. 
\end{proof}

\subsection*{Proof of Theorem \ref{t:resol}}  Once we have Lemma \ref{l:resol}, the $L^p$--$L^q$ resolvent estimates for the Lam\'e operator can be deduced by making use of those for the Laplacian in \cite{KL19}. 

\subsubsection*{Upper bound in \eqref{e:shp-resol}}
First, we recall from \cite[Theorem 1.4]{KL19} the estimate
\begin{equation}\label{e:lap-res}
	\|(-\Delta-z)^{-1}\|_{p\to q}\lesssim \kappa_{p,q}(z),
\end{equation}
where $p,q$ are given as in Theorem \ref{t:resol}. By Lemma \ref{l:resol} and \eqref{e:lap-res} we then have that for every $j=1,\ldots, d$, 
\begin{align*}
&\|[(-\Delta^\ast-z)^{-1}f]_j\|_{L^q(\mathbb R^d)} \\
&\lesssim \frac{1}\mu \kappa_{p,q}\Big(\frac z\mu\Big) \|f_j\|_{L^p(\mathbb R^d)} 
	+\sum_{k=1}^d \bigg(\frac{1}{\lambda+2\mu}\kappa_{p,q}\Big(\frac z{\lambda+2\mu}\Big) +\frac{1}{\mu}\kappa_{p,q}\Big(\frac z\mu\Big) \bigg)\|R_j R_k f_k\|_{L^p(\mathbb R^d)}\\
&\le \bigg[ \mu^{-\frac d2(\frac1p-\frac1q)} + \Big((\lambda+2\mu)^{-\frac d2(\frac1p-\frac1q)}+\mu^{-\frac d2(\frac1p-\frac1q)}\Big) \tan^2\Big(\frac{\pi}{2\min\{p, p'\}}\Big) \bigg] \kappa_{p,q}(z) \|f\|_{L^p(\mathbb R^d)^d}
\end{align*}
since $\kappa_{p,q}(z/\mu)=\mu^{1-\frac d2(\frac1p-\frac1q)}\kappa_{p,q}(z)$, and the Riesz transforms are bounded on $L^p(\mathbb R^d)$, $1<p<\infty$, with norm $\|R_j\|_{p\to p}=\tan(\frac{\pi}{2\min\{p, p'\}})$. (See \cite[Theorem 3]{BW95}). This proves the upper bound in \eqref{e:shp-resol}.

\subsubsection*{Proof of \eqref{e:rest-wk-type} and \eqref{e:wk-type}}
For the restricted weak type bound \eqref{e:rest-wk-type} we argue similarly using the restricted weak type $(p,q)$ bound for the Laplacian resolvent (\cite[Theorem 1.4]{KL19}) and the $L^{p,1}$--$L^{p,1}$ estimate of the Riesz transforms (see \cite[Theorem 1.1]{Os14}). Indeed, 
\begin{align*}
&\|[(-\Delta^\ast-z)^{-1}f]_j\|_{L^{q,\infty}(\mathbb R^d)} \lesssim \frac1\mu\|f_j\|_{L^{p,1} (\mathbb R^d)} 
	+ \bigg(\frac1{\lambda+2\mu}+\frac1\mu \bigg) \sum_{k=1}^d \|R_j R_k f_k\|_{L^{p,1}(\mathbb R^d)} \lesssim \|f\|_{L^{p,1}(\mathbb R^d)^d}
\end{align*}
The weak type bound \eqref{e:wk-type} can also be shown in the similar way using the weak type $(p,q)$ bound for $(-\Delta-z)^{-1}$ in \cite[Theorem 1.4]{KL19}.

\subsubsection*{Lower bound in \eqref{e:shp-resol}}  On the other hand, the lower bound in \eqref{e:shp-resol} can be obtained  by considering functions whose Fourier transform is  supported near  the  sphere $S^{d-1}$  as it  was done in \cite[Lemma 5.1]{KL19}. However, as each component of the multiplier is not rotationally symmetric, we cannot directly use the well-known asymptotic of the Bessel function anymore as in \cite{KL19}. In order to get around this difficulty, we instead apply the stationary phase method to obtain the asymptotic of oscillatory integrals.

We now show the lower bound in \eqref{e:shp-resol}. It is enough to show, taking $f=(f_1, 0, \ldots, 0)$ and $j=1$ in \eqref{e:lameresol}, that
\[	\| (-\mu\Delta-z)^{-1}(1+R_1^2)- (-(\lambda+2\mu)\Delta-z)^{-1}R_1^2 \|_{p\to q} \gtrsim \kappa_{p,q}(z). \] 
Multiplying this by $\mu$ and then replace $z$ with $\mu z$, this is equivalent to 
\[	\Big\| (-\Delta-z)^{-1}(1+R_1^2) - \big(-\frac{\lambda+2\mu}{\mu}\Delta-z\big)^{-1}R_1^2 \Big\|_{p\to q} \gtrsim \mu\kappa_{p,q}(\mu z) \approx \kappa_{p,q}(z),	\]
which is again identical with 
\[	\Big\| \big(-\Delta-\frac z{|z|} \big)^{-1}(1+R_1^2) - \big(-\frac{\lambda+2\mu}{\mu}\Delta- \frac z{|z|} \big)^{-1}R_1^2 \Big\|_{p\to q} \gtrsim \dist \big(\frac z{|z|},[0,\infty)\big)^{-\gamma_{p,q}}	\]
since $\|(-\Delta-z)^{-1}\|_{p\to q} = |z|^{-1+\frac d2(\frac1p-\frac1q)} \|(-\Delta-\frac z{|z|})^{-1}\|_{p\to q}$ and the multiplier of the Riesz transform $R_1$ is of homogeneous of degree zero. (The identity can be readily checked by the Fourier inversion formula and scaling; $x\to |z|^{-\frac12}x$ for the space variable and $\xi\to |z|^\frac12\xi$ for the frequency variable.) Hence, recalling the definition of $\gamma_{p,q}$ it is enough to show that, for $0<\delta\ll 1$,
\begin{equation}\label{e:low}
	\begin{aligned}
		&\Big\| \big(-\Delta-(1+i\delta)\big)^{-1} (1+R_1^2) - \big(-\rho \Delta-(1+i\delta) \big)^{-1} R_1^2 \Big\|_{p\to q} \\
		&\qquad \qquad \gtrsim \max \big\{1, \delta^{-1+\frac{d+1}2(\frac1p-\frac1q)}, \delta^{\frac{d-1}2-\frac dq}, \delta^{\frac dp-\frac{d+1}2} \big\}	
	\end{aligned}
\end{equation}
under the assumption $\rho:=\frac{\lambda+2\mu}\mu\neq1$.

In the above \eqref{e:low} the first lower bound is clear because the operator is nontrivial. And the last lower bound follows from the third if we replace $1+i\delta$ with $1-i\delta$, by the following general principle of duality (for every linear operator $T$ and its adjoint $T^\ast$)
\begin{equation}\label{e:duality}	
	\|T^\ast\|_{p\to q} =\|T\|_{q'\to p'}.
\end{equation}
Hence it suffices to show the second and third.  If we denote the multiplier in \eqref{e:low} by 
\[	a_\delta (\xi) = \frac{1}{|\xi|^2-(1+i\delta)}\frac{|\xi'|^2}{|\xi|^2} + \frac{1}{\rho|\xi|^2-(1+i\delta)}\frac{\xi_1^2}{|\xi|^2}, \quad \xi'=(\xi_2,\ldots, \xi_d)\in \mathbb R^{d-1},  	\]
and use the standard notation $a_\delta (D)f=\mathcal F^{-1}(a_\delta \widehat f\, )$ (which is of course equal to the operator in \eqref{e:low}), then it is easy to see that $\|a_\delta(D)\|_{p\to q}=\|\overline{a_\delta}(D)\|_{p\to q}$ since $a_\delta(\xi)$ is invariant under the reflection $\xi\to -\xi$. Consequently, we have 
\[	2\|a_\delta (D)\|_{p\to q}\ge \|a_\delta (D)\pm \overline{a_\delta} (D)\|_{p\to q}. 	\] 
Thus, to show \eqref{e:low}  it is enough to consider  the imaginary part  $\mathcal I_\delta:=\imag a_{\delta}$, that is,  
\[	\mathcal I_\delta(\xi)=\frac\delta{(|\xi|^2-1)^2+\delta^2} \frac{|\xi'|^2}{|\xi|^2} +\frac\delta{(\rho|\xi|^2-1)^2+\delta^2}\frac{\xi_1^2}{|\xi|^2}=: M_\delta(\xi)+m_\delta(\xi). \]

Thus, for the lower bound \eqref{e:low}, it suffices to show the following proposition, which  completes  the  proof of Theorem \ref{t:resol}. Let us set $D=-i\nabla$ and, for any bounded measurable function $m$ on $\mathbb R^d$, let $m(D)$ denote the Fourier multiplier with multiplier (symbol) $m$, i.e.,  $m(D)f=\mathcal F^{-1}(m\widehat f \,)$.
\begin{prop}\label{p:optimal}
	Let $d\ge 2$, $1\le p,q\le \infty$ and let $0<\delta\ll 1$. Then 
\begin{align}
\label{e:low-im}
	\|\mathcal I_\delta(D)\|_{p\to q} &\gtrsim \delta^{-1+\frac{d+1}2(\frac1p-\frac1q)},\\
\label{e:low-im2}
	\|\mathcal I_\delta(D)\|_{p\to q} &\gtrsim \delta^{\frac{d-1}2-\frac dq}.
\end{align}	
\end{prop}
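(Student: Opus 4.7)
The plan is to establish the two lower bounds \eqref{e:low-im} and \eqref{e:low-im2} separately by constructing explicit test functions, after decoupling the contributions of $M_\delta$ and $m_\delta$. Since $\rho\neq 1$, on any fixed sufficiently thin neighborhood of the sphere $|\xi|=1$ one has $(\rho|\xi|^2-1)^2\gtrsim (\rho-1)^2$, hence $m_\delta(\xi)=O(\delta)$ on that neighborhood. Thus for any test function whose Fourier transform lives in such a neighborhood, $m_\delta(D)$ contributes only $O(\delta)$ to $\|\mathcal I_\delta(D)f\|_q$ and can be absorbed as an error. Both bounds then reduce to corresponding lower bounds for $M_\delta(D)$, which may be viewed as the imaginary part of the Helmholtz resolvent $(-\Delta-(1+i\delta))^{-1}$ multiplied by the angular factor $|\xi'|^2/|\xi|^2$.

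For \eqref{e:low-im}, I would take $\widehat f$ to be a smooth Knapp bump on a slab of dimensions $\delta^{1/2}\times\cdots\times\delta^{1/2}\times\delta$ on the sphere $|\xi|=1$ centered at a point $\xi_0$ with $\xi_0'\neq 0$ (say $\xi_0=e_2$), so that $|\xi'|^2/|\xi|^2\approx 1$ on $\supp\widehat f$. Standard calculations analogous to \cite[Lemma 5.1]{KL19} yield $\|f\|_p\approx\delta^{(d+1)/(2p')}$, and since $M_\delta(\xi)\approx\delta^{-1}$ on $\supp\widehat f$, one has $\|\mathcal I_\delta(D)f\|_q\ge\|M_\delta(D)f\|_q-O(\delta)\approx\delta^{(d+1)/(2q')-1}$. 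Taking the ratio then yields \eqref{e:low-im}.

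For \eqref{e:low-im2}, I would fix a smooth cutoff $\chi$ equal to $1$ on a small neighborhood of $|\xi|=1$ and supported away from $|\xi|=1/\sqrt\rho$, and set $f=\mathcal F^{-1}\chi$, so that $\|f\|_p\lesssim 1$ and $\mathcal I_\delta(D)f=K_\delta+O(\delta)$ in every $L^q$-norm, with $K_\delta=\mathcal F^{-1}(\chi M_\delta)$. To lower-bound $K_\delta$ pointwise, I would pass to polar coordinates $\xi=r\omega$ and apply stationary phase to the angular integral $\int_{S^{d-1}}e^{irx\cdot\omega}|\omega'|^2\,d\sigma(\omega)$; the two nondegenerate critical points $\omega=\pm x/|x|$ carry amplitude $|x'|^2/|x|^2$, so the leading term has size $(r|x|)^{-(d-1)/2}\,|x'|^2/|x|^2$ times an oscillation $e^{\pm ir|x|}$. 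Substituting $s=r-1$ in the remaining radial integral and using the identity $\int \delta\,[(2s)^2+\delta^2]^{-1}e^{is|x|}\,ds=(\pi/2)e^{-\delta|x|/2}$, which is $O(1)$ for $|x|\ll\delta^{-1}$, one concludes $|K_\delta(x)|\gtrsim|x|^{-(d-1)/2}$ on a positive-measure subset of every dyadic annulus inside the sector $\{1\ll|x|\ll\delta^{-1},\ |x'|\ge|x|/2\}$. Integrating this lower bound in $L^q$ over $|x|\sim\delta^{-1}$ gives $\|K_\delta\|_q\gtrsim\delta^{(d-1)/2-d/q}$, whence \eqref{e:low-im2}.

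The main obstacle is the stationary-phase step: because $|\omega'|^2$ is not rotationally symmetric, the standard Bessel-function asymptotic used in \cite{KL19} is unavailable, and the leading term of the expansion degenerates on the $x_1$-axis (where $|x'|=0$), forcing the pointwise kernel lower bound to be restricted to a sector bounded away from it. The residual oscillation $e^{\pm i|x|}$ is then controlled by integrating over dyadic annular shells, where the cosine factor averages to a positive constant.
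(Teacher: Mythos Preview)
For \eqref{e:low-im} your Knapp-bump argument coincides with the paper's (the paper centers the cap at $e_d$ rather than $e_2$, which is immaterial since $|\xi'|^2/|\xi|^2\approx1$ at either point).

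For \eqref{e:low-im2} your route works but differs from the paper's in several respects. The paper first rescales $\xi\mapsto\rho^{-1/2}\xi$ so that the \emph{main} singular term becomes the one carrying the factor $\xi_1^2/|\xi|^2$; it then localizes $\widehat f$ to a small cap near $-e_1$ and absorbs this angular factor into the choice of $f$ (equation \eqref{e:choice-max}), parametrizing the sphere as a graph $\xi_1=\psi(\xi')-1$. Stationary phase in $\xi'$ then has a \emph{single} critical point, and the remaining one-dimensional $\tau$-integral is handled directly; this produces a uniform pointwise lower bound on the narrow cone $B_\delta=\{x_1\sim\delta^{-1},\,2^5|x'|\le x_1\}$, with no residual oscillation to average out. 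The nonsingular term is estimated on $B_\delta$ by the same asymptotic expansion (Lemma~\ref{l:asymp} with symbol $b_\delta$), yielding an error exactly a factor $\delta$ below the main term. Your approach instead keeps $M_\delta$ as the main term, uses a radial $\widehat f$, and does stationary phase over the full sphere, picking up two antipodal critical points and hence a $\cos(|x|+c)$ factor that you then average away by restricting to a positive-proportion subset. This is more elementary---polar coordinates and the Poisson-kernel identity replace the parabolic change of variables and Lemma~\ref{l:asymp}---at the price of discarding the zero set of the cosine and working in the complementary sector $|x'|\ge|x|/2$.

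One point needs sharpening. Your global bound $\|m_\delta(D)f\|_q=O(\delta)$ is not enough when $(d-1)/2-d/q>1$ (that is, $d\ge4$ and $q>2d/(d-3)$), because then the target lower bound $\delta^{(d-1)/2-d/q}$ is itself smaller than $\delta$. The fix is immediate: since $m_\delta\chi\in C_c^\infty$ with all seminorms $\lesssim\delta$, one has $|m_\delta(D)f(x)|\lesssim_N\delta(1+|x|)^{-N}$, hence $\|m_\delta(D)f\|_{L^q(\{|x|\sim\delta^{-1}\})}=O(\delta^{N+1})$, negligible for every $q$. Relatedly, you should work on $|x|\sim\delta^{-1}$ (not $|x|\ll\delta^{-1}$); there $e^{-\delta|x|/2}\approx e^{-1/2}$ is still bounded below, so the radial integral remains $\gtrsim1$.
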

Proving Proposition \ref{p:optimal} is a messy affair and we shall therefore hold off doing so until the next section. We close this section with  the proofs of Proposition \ref{p:admissible} and Corollary \ref{c:eigen}.

\begin{proof}[{\bf Proof of Proposition \ref{p:admissible}}]
The proof is similar to that of the necessary part of \cite[Proposition 1.1]{KL19}. It is well-known that the condition $p\le q$ is necessary for the resolvent estimate since every Fourier multiplier on $\mathbb R^d$ is translation invariant (\cite[Theorem 1.1]{Ho60}). For the other necessary conditions, let us first assume that $d\ge 3$. In view of \eqref{e:low}, it is enough to show that, for every $\rho>0$ with $\rho\neq 1$ and every $z\in S^1\setminus\{1\}$,
\begin{equation}\label{e:adm1}
	\|(-\Delta-z)^{-1}(1+R_1^2)- (-\rho \Delta-z )^{-1} R_1^2 \|_{p\to q} <\infty
\end{equation}
holds only if 
\begin{equation}\label{e:adm2}
	1/p-1/q\le 2/d, \quad (p,q)\neq (d/2,\infty), \quad \text{and} \quad (p,q)\neq (1, d/(d-2)).	
\end{equation}
By duality \eqref{e:duality}, we need only to consider the first and second conditions. Let $\phi\in C^\infty_c(\mathbb R)$ be the standard Littlewood--Paley bump function supported on the interval $[1/2,2]$ satisfying 
\[	\sum_{k\ge 0} \phi(2^{-k}t)=1, \quad t\ge 1, \]
and let $\phi_0(t):=1-\sum_{k\ge 0}\phi(2^kt)$.  Let us also define projection operators $P_k$ and $\mathcal P_0$ by $\widehat{P_kf}(\xi)=\phi(2^{-k}|\xi|)\widehat f(\xi)$ and $\widehat{\mathcal P_0f}(\xi)=\phi_0(|\xi|)\widehat f(\xi)$, respectively. Testing the bound \eqref{e:adm1} with $P_kf$ and scaling $\xi\to 2^{k}\xi$ give the inequality
\[	\bigg\|\int_{\mathbb R^d} e^{ix\cdot\xi} \Big(\frac1{|\xi|^2-2^{-2k}z}\frac{|\xi'|^2}{|\xi|^2} + \frac1{\rho|\xi|^2-2^{-2k}z}\frac{\xi_1^2}{|\xi|^2} \Big) \phi(|\xi|) \widehat g(\xi) d\xi \bigg\|_{L_x^q(\mathbb R^d)} \lesssim 2^{k(2+\frac dq-\frac dp)}\|g\|_{L^p(\mathbb R^d)}	\]
for all $g\in L^p(\mathbb R^d)$. If we consider the limit $k\to \infty$, the left side converges to a nonzero value provided that $g\neq0$ is suitably chosen, while the right side converges to zero whenever $1/p-1/q>2/d$. This is contradiction to the assumption \eqref{e:adm1}, which shows that the first condition of \eqref{e:adm2} is necessary for \eqref{e:adm1}.

Now, let us assume that \eqref{e:adm1} is true for $(p,q)=(d/2,\infty)$. Denoting the multiplier of \eqref{e:adm1} by $m_z(\xi)$ we then have
\[	\| |D|^{-2}(1-\phi_0(|D|))f \|_{L^\infty(\mathbb R^d)} \lesssim \|m_z(D)^{-1} |D|^{-2}(1-\phi_0(|D|))f \|_{L^{\frac d2}(\mathbb R^d)}.	\]
Since the multiplier $m_z(\xi)^{-1}|\xi|^{-2}(1-\phi_0(|\xi|))$ satisfies the Mikhlin's condition, it follows from Mikhlin's multiplier theorem that 
\[	\| |D|^{-2}(1-\phi_0(|D|))f \|_{L^\infty(\mathbb R^d)} \lesssim \|f\|_{L^\frac d2(\mathbb R^d)},	\]
which, by scaling, implies
\[	\| |D|^{-2}(1-\phi_0(\epsilon |D|))f \|_{L^\infty(\mathbb R^d)} \lesssim \|f\|_{L^\frac d2(\mathbb R^d)}, \quad \forall \epsilon>0.	\]
By Fatou's lemma this estimate in turn implies the following inequality
\[	\|(-\Delta)^{-1}f\|_{L^\infty(\mathbb R^d)} \lesssim \|f\|_{L^\frac d2(\mathbb R^d)},\]
which is false. Thus, we see that the second condition in \eqref{e:adm2} is necessary for \eqref{e:adm1}.

Finally, we assume $d=2$ and show that the estimate \eqref{e:adm1} fails when $(p,q)=(1,\infty)$. Note that the multiplier operator in \eqref{e:adm1} is written
\[	T_zh(x):=\frac1{(2\pi)^2} \int_{\mathbb R^2} e^{ix\cdot \xi} \Big(\frac1{|\xi|^2-z}\frac{\xi_2^2}{|\xi|^2} +\frac1{\rho|\xi|^2-z}\frac{\xi_1^2}{|\xi|^2}\Big) \widehat h(\xi) d\xi. \] 
For every $\epsilon>0$, let us define $h_\epsilon \in \mathcal S(\mathbb R^2)$ by 
\begin{equation}\label{e:def_h}
	\widehat{h_\epsilon}(\xi) = \phi_0(\epsilon^2\xi)-\phi_0(\epsilon\xi).
\end{equation}
By the definition of $\phi_0$ it is clear that $0\le \widehat{h_\epsilon} \le1$,
\begin{gather}
	\supp \widehat{h_\epsilon} \subset \{\xi\in \mathbb R^2\colon 1/2\epsilon \le |\xi| \le 1/\epsilon^2\}, \label{e:supp} \\ 
	\widehat {h_\epsilon} (\xi) = 1 \quad \text{if} \quad 1/\epsilon\le |\xi| \le 1/2\epsilon^{2},  \label{e:supp1}
\end{gather}
and
\begin{equation} \label{e:unif_norm}
	\|h_\epsilon\|_{L^1(\mathbb R^2)}\le 2 \|\mathcal F^{-1}\phi_0\|_{L^1(\mathbb R^2)}\lesssim 1.
\end{equation} 
If the estimate \eqref{e:adm1} were true for $(p,q)=(1,\infty)$, then it would also be true that 
\begin{equation}\label{e:fake}
	|\real T_zh_\epsilon(0)|\le |T_zh_\epsilon(0)| \le \|T_zh_\epsilon\|_{L^\infty(\mathbb R^2)} \lesssim \|h_\epsilon\|_{L^1(\mathbb R^2)}\lesssim 1
\end{equation}
with all the inequalities uniform in $\epsilon>0$. Let us now examine $|\real T_zh_\epsilon(0)|$. If we write $z=a+ib\in S^1\setminus\{1\}$, then 
\begin{equation}\label{e:re-tz}
	\real T_zh_\epsilon(0)
	= \frac{1}{(2\pi)^2}\int_{\mathbb R^2} \bigg(\frac{|\xi|^2-a}{(|\xi|^2-a)^2+b^2}\frac{\xi_2^2}{|\xi|^2} +\frac{\rho|\xi|^2-a}{(\rho|\xi|^2-a)^2+b^2} \frac{\xi_1^2}{|\xi|^2}\bigg) \widehat{h_\epsilon}(\xi) d\xi .
\end{equation}
Since $a^2+b^2=1$ it is easy to see that, for $|\xi|\ge 2$ satisfying $|\xi_2|\ge |\xi_1|$,
\[	\frac{|\xi|^2-a}{(|\xi|^2-a)^2+b^2} \frac{\xi_2^2}{|\xi|^2} \ge \frac{|\xi|^2-1}{2(|\xi|^2+1)^2} \gtrsim \frac{1}{|\xi|^2}.	\]
Similarly, if $|\xi|\ge \frac 2{\sqrt{\rho}}$ and $|\xi_2|\le |\xi_1|$, then
\[	\frac{\rho|\xi|^2-a}{(\rho|\xi|^2-a)^2+b^2} \frac{\xi_1^2}{|\xi|^2} \ge \frac{\rho|\xi|^2-1}{2(\rho|\xi|^2+1)^2} \gtrsim \frac 1{\rho|\xi|^2}.	\]
For every $\epsilon\le \frac12 \min\{1,\sqrt{\rho}\}$, it follows from \eqref{e:supp1} and \eqref{e:re-tz} that 
\begin{align*}
	|\real T_zh_\epsilon(0)|
	&\gtrsim \int_{1/\epsilon \le |\xi|\le 1/2\epsilon^2} |\xi|^{-2} \mathbf{1}_{\{\xi\colon |\xi_2|\ge|\xi_1|\}} (\xi)  d\xi
		+ (\rho|\xi|)^{-2} \mathbf{1}_{\{\xi\colon |\xi_2|\le|\xi_1|\}} (\xi)  d\xi \\
	&\gtrsim \int_{1/\epsilon \le |\xi|\lesssim 1/\epsilon^2} |\xi|^{-2}d\xi \approx \log \frac1\epsilon,
\end{align*}
which diverges to infinity as $\epsilon \to 0$. This is contradiction to \eqref{e:fake}. Thus, the estimate \eqref{e:adm1} fails with $(p,q)=(1,\infty)$ when $d=2$. 
\end{proof}

\begin{proof}[{\bf Proof of Corollary \ref{c:eigen}}]
Since $E$ is an $L^q$-eigenvalue of $-\Delta^\ast+V$, there exists a nonzero $u\in L^q(\mathbb R^d)^d$ such that $(-\Delta^\ast+V)u=Eu$. Assume that $E$ lies in $\mathcal Z_{p,q}(\ell)$. Since $q\ge p$ H\"older's and Minkowski's inequalities give
\begin{align*}
	\|Vu\|_{L^p(\mathbb R^d)^d}
	&\le \bigg (\sum_{j=1}^d \Big( \sum_{k=1}^d \|V_{jk}\|_{L^\frac{pq}{q-p}(\mathbb R^d)}^{q'} \Big)^{\frac{p}{q'}} \bigg)^\frac1p \|u\|_{L^q(\mathbb R^d)^d}
	=: \|V_{jk}\|_{l_j^p l_k^{q'}L^\frac{pq}{q-p}} \|u\|_{L^q(\mathbb R^d)^d}.
\end{align*}
Now we recall the inequality 
\begin{equation}\label{e:summ}
	\Big(\sum_{i=1}^d a_i \Big)^\theta\le d^{\theta-1} \sum_{i=1}^d a_i^\theta
\end{equation} 
which holds for every $\theta\ge1$ and $a_i\ge 0$. Applying this inequality with ($i=j$ and) 
\[	a_j=\Big( \sum_{k=1}^d \|V_{jk}\|_{L^\frac{pq}{q-p}(\mathbb R^d)}^{q'} \Big)^{\frac{p}{q'}} \quad \text{and} \quad \theta= \frac q{q-p}	\]
we see that
\[	\|V_{jk}\|_{l_j^p l_k^{q'}L^\frac{pq}{q-p}} 
	\le \bigg( d^{\frac{q}{q-p}-1} \sum_{j=1}^d \Big( \sum_{k=1}^d \|V_{jk}\|_{L^\frac{pq}{q-p}(\mathbb R^d)}^{q'} \Big)^{\frac{p(q-1)}{q-p}} \bigg)^\frac{q-p}{pq}. 	\]
Similarly, applying \eqref{e:summ} with ($i=k$ and) 
\[	a_k= \|V_{jk}\|_{L^\frac{pq}{q-p}(\mathbb R^d)}^{q'} \quad \text{and} \quad \theta= \frac{p(q-1)}{q-p}	\]
we have
\[	\|V_{jk}\|_{l_j^p l_k^{q'}L^\frac{pq}{q-p}} 
	\le \bigg( d^{\frac{q}{q-p}+\frac{p(q-1)}{q-p} -2} \sum_{j=1}^d  \sum_{k=1}^d \|V_{jk}\|_{L^\frac{pq}{q-p}(\mathbb R^d)}^\frac{pq}{q-p} \bigg)^\frac{q-p}{pq}
	= d^{1-\frac1p+\frac1q}\|V\|_{L^\frac{pq}{q-p}(\mathbb R^d)^{d^2}}. 	\]
From the resolvent estimate \eqref{e:shp-resol} and the assumptions $E\in \mathcal Z_{p,q}(\ell)$, $(-\Delta^\ast+V)u=Eu$, and \eqref{e:small_potential}, it follows that
\begin{align*}
	\|u\|_{L^q(\mathbb R^d)^d} 
	&\le C\kappa_{p,q}(E) \|(-\Delta^\ast-E)u\|_{L^p(\mathbb R^d)^d} \\
	&\le C\ell \big(\|(-\Delta^\ast+V-E)u\|_{L^p(\mathbb R^d)^d} +\|Vu\|_{L^p(\mathbb R^d)^d}\big) \\
	&\le C\ell d^{1-\frac1p+\frac1q}\|V\|_{L^\frac{pq}{q-p}(\mathbb R^d)^{d^2}} \|u\|_{L^q(\mathbb R^d)^d} \\
	&\le t\|u\|_{L^q(\mathbb R^d)^d},
\end{align*}
which forces $u$ to be identically zero since $t\in (0,1)$. This  contradicts that $u$ is nonzero. Therefore, $E$ must lie in  $\mathbb C\setminus\mathcal Z_{p,q}(\ell)$.
\end{proof}

\section{Proof of Proposition \ref{p:optimal}}
In this section, we prove Proposition \ref{p:optimal}.

\begin{proof}[Proof of \eqref{e:low-im}] Let us choose $\phi, \psi \in C_c^\infty(\mathbb R)$ such that $0\le \phi, \psi\le 1$, $\supp\phi\subset[-1,1]$, $\phi=1$ on $[-1/2,1/2]$, $\supp\psi\subset[1/4,1]$, and $\psi=1$ on $[1/2,3/4]$. Now we define $h_\delta\in\mathcal S(\mathbb R^d)$ by 
\[	\widehat {h_\delta}(\xi)=\psi\Big(\frac{\xi_d-1}{\delta}\Big)\prod_{j=1}^{d-1}\phi\Big(\frac{\xi_j}{\sqrt\delta} \Big). \]
It is obvious that 
\begin{equation}\label{e:suppsize}
	|\supp \widehat{h_\delta}|\approx |\{\xi\in \mathbb R^d\colon \widehat{h_\delta}(\xi)=1 \} |\approx \delta^{1+\frac{d-1}2}.
\end{equation}
By the Fourier inversion formula it is also clear that 
\[	h_\delta(x)=e^{ix_d} \delta \big(\mathcal F^{-1}\psi\big)(\delta x_d) \prod_{j=1}^{d-1} \sqrt\delta \big(\mathcal F^{-1}\phi\big) (\sqrt\delta x_j),	\] 
from which it follows that 
\begin{equation}\label{e:lpnormh}
	\|h_\delta\|_{L^p(\mathbb R^d)}\approx\delta^{\frac{d+1}2 -\frac{d+1}{2p}}. 
\end{equation}
If $\xi\in \supp \widehat{h_\delta}$ then, writing $\xi=(\eta, \tau)\in \mathbb R^{d-1}\times\mathbb R$, we see that
\begin{gather*}
	|\xi|^2-1=|\eta|^2+(\tau+1)(\tau-1)\approx \delta,  \\
	|\xi|^2=1+|\eta|^2+(\tau+1)(\tau-1)\approx 1+\delta \approx 1
\end{gather*} 
since $\delta$ is small. Similarly, it is easy to check that $|\xi'|^2\approx 1$ for $\xi\in \supp \widehat{h_\delta}$.  Hence,
\[	\delta^{-1} \ge M_\delta(\xi) := \frac\delta{(|\xi|^2-1)^2+\delta^2} \frac{|\xi'|^2}{|\xi|^2} \approx \frac{\delta}{\delta^2+\delta^2} \approx \delta^{-1}, \quad \xi\in \supp \widehat {h_\delta}. 	\]
On the other hand, since $\sqrt{\rho}\neq 1$, 
\[	\big|\rho|\xi|^2-1\big| = \big|\rho|\eta|^2 +(\sqrt{\rho}\tau+1)(\sqrt{\rho}(\tau-1)+\sqrt{\rho}-1)\big| \gtrsim 1, \quad \xi \in \supp \widehat{h_\delta}.	\]
Hence $0\le m_\delta(\xi) \lesssim \delta$ on $\supp \widehat{h_\delta}$, and we have  
\begin{equation}\label{e:m-size}
	\mathcal I_\delta(\xi) =M_\delta(\xi)+m_\delta(\xi) \gtrsim  \delta^{-1}, \quad \xi\in  \supp \widehat {h_\delta}.
\end{equation}
Let us set 
\[	A_{\delta}:=\{(y,t)\in\mathbb R^{d-1}\times \mathbb R\colon |t|\le (100\delta)^{-1},\, |y_j|\le (100d\sqrt{\delta})^{-1}, \, 1\le j\le d-1 \}. \]
It is easy to see that if $x=(y,t)\in A_\delta$ and $\xi=(\eta ,\tau)\in \supp \widehat{h_\delta}$, then
\[	|x \cdot (\xi - e_d )| 
	\le |y| |\eta | + |t | | \tau-1| 
	\le \frac{\sqrt{d-1}}{100d\sqrt{\delta}}  \cdot \sqrt{(d-1)\delta} + \frac{1}{100\delta} \cdot \delta \le \frac1{50},	\]
and it follows that 
\begin{equation} \label{e:cos-1}
	\cos(x\cdot(\xi-e_d)) \ge 1-\frac {|x \cdot (\xi - e_d )|^2}2 \gtrsim 1.
\end{equation}
Hence, by the estimates \eqref{e:suppsize}, \eqref{e:m-size}, and \eqref{e:cos-1}, we have 
\begin{align*}
	|\mathcal I_\delta(D)h_\delta(x)|
	&= \bigg| \frac1{(2\pi)^d }\int_{\mathbb R^d} e^{ix\cdot(\xi-e_d)} \mathcal I_\delta(\xi)\widehat{h_\delta}(\xi) d\xi \bigg| \\
	&\ge  \frac1{(2\pi)^d } \int_{\{\xi \in \mathbb R^d \colon \widehat{h_\delta}(\xi)=1 \}} \cos(x\cdot(\xi-e_d)) \mathcal I_\delta(\xi) d\xi 
	\gtrsim  \delta^{\frac{d-1}{2}}.
\end{align*}
Since $|A_\delta| \approx \delta^{-\frac{d+1}2}$ this estimate gives  $\|\mathcal I_\delta(D)h_\delta\|_{L^q(A_\delta)} \gtrsim \delta^{\frac{d-1}2 -\frac{d+1}{2q}}$. Therefore, from \eqref{e:lpnormh} we conclude that 
\[	\|\mathcal I_\delta(D)\|_{p\to q} 
	:=\sup_{f\neq 0} \frac{\|\mathcal I_\delta (D)f\|_{L^q(\mathbb R^d)}}{\|f\|_{L^p(\mathbb R^d)}}
	\ge\frac{\|\mathcal I_\delta (D)h_\delta\|_{L^q(A_\delta)}}{\|h_\delta\|_{L^p(\mathbb R^d)}}
	\gtrsim \frac{\delta^{\frac{d-1}2 -\frac{d+1}{2q}}}{\delta^{\frac{d+1}2 -\frac{d+1}{2p}}}=\delta^{-1+\frac{d+1}2(\frac1p-\frac1q)},
\]
which shows the bound \eqref{e:low-im}.
\end{proof}

From now on, we write $\xi=(\xi_1,\xi')=(\xi_1,\xi_2, \bar\xi)\in\mathbb R\times\mathbb R\times \mathbb R^{d-2}$.  Also, we sometimes write $\tau=\xi_1$. 
\begin{proof}[Proof of \eqref{e:low-im2}]
By scaling, we note that $\| \mathcal I_\delta(D)\|_{p\to q}= \rho^{-\frac d2(\frac1p-\frac1q)}\|\widetilde{\mathcal I}_\delta(D)\|_{p\to q}$, 
where 
\[	\widetilde{\mathcal I}_\delta(\xi):= \mathcal{I}_\delta(\rho^{-\frac12}\xi) = \frac\delta{(\rho^{-1}|\xi|^2-1)^2+\delta^2}\frac{|\xi'|^2}{|\xi|^2} + \frac{\delta}{(|\xi|^2-1)^2+\delta^2}\frac{\xi_1^2}{|\xi|^2}.	\]  
Thus, in order to prove \eqref{e:low-im}, it is harmless to assume that $\mathcal I_\delta=\widetilde{\mathcal I}_\delta$, 
\[	M_\delta(\xi)=\frac\delta{(\rho^{-1}|\xi|^2-1)^2+\delta^2}\frac{|\xi'|^2}{|\xi|^2}, \quad \text{and} \quad  m_\delta(\xi)=\frac{\delta}{(|\xi|^2-1)^2+\delta^2}\frac{\xi_1^2}{|\xi|^2}.	\]
If we put $\psi(\xi'):= 1-\sqrt{1-|\xi'|^2}$ and apply change of variables via diffeomorphism $\xi\to (\xi_1-1+\psi(\xi'), \xi')$, then we have
\begin{align*}
	m_\delta(D)f(x) = \frac{e^{-ix_1}}{(2\pi)^d} \int_{\mathbb R} e^{i x_1\xi_1} & \int_{\mathbb R^{d-1}} e^{i(x'\cdot\xi' +x_1\psi(\xi'))} \frac\delta{\xi_1^2(\xi_1+2\psi(\xi')-2)^2+\delta^2} \\
	&\qquad \times \frac{(\xi_1+\psi(\xi')-1)^2}{(\xi_1+\psi(\xi')-1)^2+|\xi'|^2} \widehat f (\xi_1+\psi(\xi')-1, \xi') d\xi' d\xi_1.
\end{align*}
We then choose a function $f\in \mathcal S(\mathbb R^d)$ so that
\begin{equation}\label{e:choice-max}
	\frac{(\xi_1+\psi(\xi')-1)^2}{(\xi_1+\psi(\xi')-1)^2+|\xi'|^2} \widehat f (\xi_1+\psi(\xi')-1, \xi') = \chi(\xi') \varphi(\xi_1), 
\end{equation}
where $\chi\in C_c^\infty(B_{d-1}(0,\frac1{10}))$ such that  $0\le \chi\le 1$ and $\chi(\xi')=1$ if $|\xi'|\le \frac1{20}$, and $\varphi\in C_c^\infty((-2\epsilon_\circ, 2\epsilon_\circ))$ satisfying  $0\le\varphi\le1$ and $\varphi(t)=1$ if $|t|\le \epsilon_\circ$. Here, $\epsilon_\circ>0$ is a fixed small number to be determined depending on $\rho$ (see Lemma \ref{l:symbol} below and its proof).

With the choice of $f$, $m_\delta(D)f$ is now written as the following favorable form:
\[	m_\delta(D)f(x) = \frac{e^{-ix_1}}{(2\pi)^d} \int_{-2\epsilon_\circ}^{2\epsilon_\circ} e^{i x_1\tau}  I_\delta(x;\tau) \varphi(\tau) d\tau,	\]
where we set 
\[	I_\delta(x;\tau):=\int e^{i(x'\cdot\xi' +x_1\psi(\xi'))} a_\delta(\tau,\xi') \chi(\xi') d\xi' , \quad a_\delta(\tau, \xi') := \frac\delta{\tau^2(\tau+2\psi(\xi')-2)^2+\delta^2}.	\] 
By Lemma \ref{l:asymp} below and the triangle inequality, if $x_1\ge 1/2$ and $2^5|x'|\le x_1$, we have
\begin{equation}\label{e:main-term}
	\begin{aligned}
	|m_\delta(D)f(x)|
		&\approx \bigg| \int_{-2\epsilon_\circ}^{2\epsilon_\circ} e^{ix_1\tau} \bigg[ e^{i(x_1-|x|)}\sum_{j=0}^{N-1}x_{1}^{-\frac{d-1}2-j}\mathcal D_ja_\delta(\tau,\xi')\vert_{\xi'=-\frac{x'}{|x|}} +\mathcal E_{\delta, N}(x;\tau) \bigg] \varphi(\tau) d\tau\bigg| \\
		&\ge x_1^{-\frac{d-1}2}\bigg| \int e^{ix_1\tau} a_\delta\Big(\tau,-\frac{x'}{|x|}\Big) \varphi(\tau) d\tau \bigg| \\
		&\quad - \sum_{j=1}^{N-1} x_1^{-\frac{d-1}2-j}\int \Big|\mathcal D_ja_\delta(\tau, \xi')\vert_{\xi'=-\frac{x'}{|x|}}\varphi(\tau)\Big| d\tau -\int \Big|\mathcal E_{\delta, N}(x;\tau) \varphi(\tau) \Big| d\tau \\
		&=:Q_0(x)-\sum_{j=1}^{N-1}Q_j(x)-R_N(x).
	\end{aligned}
\end{equation}
Now, for a large number $\nu>0$ to be chosen shortly, let us define the set 
\[	B_\delta:= \big\{x\in\mathbb R^d\colon (20\nu\delta)^{-1} \le x_1\le (10\nu\delta)^{-1}, \, x_1\ge  2^5|x'| \big\}.	\]
Then we break the integral in the term $Q_0(x)$ as
\[	\int_{-\nu\delta}^{\nu\delta} e^{ix_1\tau} a_\delta\Big(\tau,-\frac{x'}{|x|}\Big) \varphi(\tau) d\tau + \bigg(\int_{-2\epsilon_\circ}^{-\nu\delta} +\int_{\nu\delta}^{2\epsilon_\circ} \bigg) e^{ix_1\tau} a_\delta\Big(\tau,-\frac{x'}{|x|}\Big) \varphi(\tau) d\tau =:\widetilde Q_{0}(x) +\widetilde R_{0}(x).	\]
If $x\in B_\delta$, we have $\frac{|x'|}{|x|}\le \frac{1}{20}$. Also, since $|\tau|\le 2\epsilon_\circ$ it is easy to see that $1-2\epsilon_\circ \le |\tau+2\psi(-\frac{x'}{|x|})-2|\le 2+2\epsilon_\circ$. Hence, if $\nu\delta\le \epsilon_\circ$, for any $x\in B_{\delta}$,
\[	|\widetilde Q_0(x)|
	\ge \int_{-\nu\delta}^{\nu\delta} \cos(x_1\tau) a_{\delta}\Big(\tau,-\frac{x'}{|x|}\Big) d\tau
	\gtrsim \int_{-\nu\delta}^{\nu\delta} \frac{\delta}{(2+2\epsilon_\circ)^2\tau^2+\delta^2}d\tau 
	\gtrsim \int_{-\nu}^\nu \frac{1}{\tau^2+1}d\tau.
\]
Similarly, for any $x\in B_{\delta}$, it is clear that 
\[	|\widetilde R_0(x)| 
	\lesssim \int_{\nu\delta}^{2\epsilon_\circ} \frac{\delta}{(1-2\epsilon_\circ)^2\tau^2+\delta^2}d\tau 
	\lesssim  \int_\nu^\infty \frac{1}{\tau^2+1}d\tau.
\]
If we choose $\nu$ large enough, we have 
\begin{equation}\label{e:q1}
	Q_0(x) \ge x_1^{-\frac{d-1}2} \big(|\widetilde Q_0(x)|-|\widetilde R_0(x)| \big) \gtrsim \delta^\frac{d-1}2, \quad \forall x\in B_\delta.
\end{equation}

For $1\le j\le N-1$, by the estimate \eqref{e:symbol} below, we see that
\begin{equation}\label{e:q2}
	Q_j(x)\lesssim \delta^{\frac{d-1}2+j} \int\frac{\delta}{(1-2\epsilon_\circ)^2\tau^2+\delta^2} d\tau \lesssim \delta^{\frac{d-1}2+j}, \quad \forall x\in B_\delta.
\end{equation}
Now we utilize the estimates \eqref{e:asymp-err} and \eqref{e:symbol} to  obtain
\begin{equation}\label{e:q3}
	R_N(x)\lesssim \delta^N \int_{-2\epsilon_\circ}^{2\epsilon_\circ} \sum_{|\alpha|\le 2N} \sup_{(\tau,\xi')} |\partial_{\xi'}^\alpha a_\delta(\tau, \xi')|d\tau \lesssim \delta^{N-1}, \quad \forall x\in B_\delta.
\end{equation}
Choosing $N$ large enough and combining all together the estimates \eqref{e:main-term}, \eqref{e:q1}, \eqref{e:q2}, and \eqref{e:q3}, we conclude that 
\begin{equation}\label{e:est-main-part}
	\|m_\delta(D)f\|_{L^q(B_\delta)} \gtrsim \delta^{\frac{d-1}2}|B_\delta|^{\frac1q} \approx \delta^{\frac{d-1}2-\frac dq}.
\end{equation}

On the other hand, the same change of variables as before in the frequency domain gives 
\[	M_\delta(D)f(x) = \frac{e^{-ix_1}}{(2\pi)^d} \int_{-2\epsilon_\circ}^{2\epsilon_\circ} e^{i x_1\tau}  J_\delta(x;\tau) \varphi(\tau) d\tau,
	\quad J_\delta(x;\tau):=\int e^{i(x'\cdot\xi' +x_1\psi(\xi'))} b_\delta(\tau,\xi') \chi(\xi') d\xi',
\]
where
\[	b_\delta(\tau,\xi') := \frac{\rho^2\delta}{[\tau(\tau+2\psi(\xi')-2)+1-\rho]^2+(\rho\delta)^2} \cdot \frac{|\xi'|^2}{(\tau+\psi(\xi')-1)^2}.
\]

Since $\rho\neq 1$, unlike the previous case of $a_\delta$, the symbol $b_\delta$ is not singular on the support of the function $\chi(\xi')\varphi(\tau)$ provided that $\epsilon_\circ$ is small enough, and this admits the uniform bound \eqref{e:symbol2} below. Making use of Lemma \ref{l:asymp} and \eqref{e:symbol2} we see that, for any $x\in B_\delta$,
\begin{align*}
	|M_\delta(D)f(x)|
	&\lesssim x_1^{-\frac{d-1}2} \int_{-2\epsilon_\circ}^{2\epsilon_\circ} \sum_{j=0}^{N-1}x_{1}^{-j} \left|\mathcal D_j b_\delta(\tau, \xi')\vert_{-\frac{x'}{|x|}}\right| +|\mathcal E_{\delta, N}(x;\tau)| d\tau \\
	&\lesssim \delta^{\frac{d-1}2} \bigg( \sum_{j=0}^{N-1}\delta^{j+1} +\delta^{N+1}\bigg), 
\end{align*}
and for a fixed large number $N$ it follows that
\begin{equation}\label{e:est-remainder}
	\|M_\delta(D)f\|_{L^q(B_\delta)}\lesssim \delta^{\frac{d-1}2+1}|B_\delta|^{\frac1q}\lesssim \delta^{\frac{d+1}2-\frac dq}.
\end{equation}
Therefore, from \eqref{e:est-main-part}, \eqref{e:est-remainder}, and the choice of the function $f\in\mathcal S(\mathbb R^d)$ in \eqref{e:choice-max}, we conclude that 
\[	\|\mathcal I(D)\|_{p\to q} \ge \frac{\|\mathcal I_\delta(D)f\|_{L^q(B_\delta)}}{\|f\|_{L^p(\mathbb R^d)}} \gtrsim \|m_\delta(D)f\|_{L^q(B_\delta)} - \|M_\delta(D)f\|_{L^q(B_\delta)} \gtrsim \delta^{\frac{d-1}2-\frac dq},
\]
which completes the proof of \eqref{e:low-im2}.
\end{proof}

Now it remains to prove the following two lemmas.
\begin{lem}\label{l:symbol}
Let $\rho\neq 1$ be a positive number and let $\psi$, $a_\delta$ and $b_\delta$ be as in the Proof of \eqref{e:low-im2}. For a fixed small number $\epsilon_\circ>0$, the following hold true: For $|\tau|\le 2\epsilon_\circ$, $|\xi'|\le \frac1{10}$, and  $0<\delta\ll 1$, we have 
\begin{align} 
\label{e:symbol}
	|\partial_{\xi'}^\alpha a_\delta(\tau, \xi')| 
		&\lesssim \frac{\delta}{\tau^2(\tau+2\psi(\xi')-2)^2+\delta^2}, \\
\label{e:symbol2}
	|\partial_{\xi'}^\alpha b_\delta(\tau, \xi')| 
		&\lesssim \frac{\delta}{[\tau(\tau+2\psi(\xi')-2)+1-\rho]^2+(\rho\delta)^2}\lesssim \delta.
\end{align}
\end{lem}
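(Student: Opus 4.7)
The plan is to compute the $\xi'$-derivatives via Fa\`a di Bruno's formula after writing the symbols as imaginary parts of simple complex rational functions, and then to exploit two pointwise estimates on $F(\tau,\xi'):=\tau(\tau+2\psi(\xi')-2)$ valid on the support in question. Since $\psi$ is real-analytic on $\{|\xi'|\le 1/10\}$ with $\psi(0)=0$, one has $|\partial^\beta F|=|2\tau\,\partial^\beta\psi|\lesssim |\tau|$ uniformly for every $|\beta|\ge 1$; and since $\psi=O(|\xi'|^2)$ and $|\tau|\le 2\epsilon_\circ\ll 1$, the factor $\tau+2\psi-2$ stays within $O(\epsilon_\circ)$ of $-2$, giving $|F(\tau,\xi')|\approx |\tau|$.

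For \eqref{e:symbol} I would write $a_\delta=\imag(F-i\delta)^{-1}$ and apply Fa\`a di Bruno to express $\partial_{\xi'}^\alpha(F-i\delta)^{-1}$ as a finite linear combination of
\[
c\,\prod_{i}(\partial^{\beta_i}F)^{n_i}\,(F-i\delta)^{-(k+1)},\qquad \sum_i n_i=k,\ \sum_i n_i|\beta_i|=|\alpha|.
\]
Since the factors $\partial^{\beta_i}F$ are real, $\imag$ pulls through them. Using $(F-i\delta)^{-(k+1)}=(F+i\delta)^{k+1}(F^2+\delta^2)^{-(k+1)}$ and noting that $\imag (F+i\delta)^{k+1}$ contains only odd powers of $i\delta$, one gets $|\imag(F-i\delta)^{-(k+1)}|\lesssim \delta(F^2+\delta^2)^{-(k+2)/2}$. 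Combining this with $\prod_i|\partial^{\beta_i}F|^{n_i}\lesssim |\tau|^k\lesssim |F|^k\le (F^2+\delta^2)^{k/2}$ yields $|\partial_{\xi'}^\alpha a_\delta|\lesssim \delta/(F^2+\delta^2)$.

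For \eqref{e:symbol2} I set $G:=F+1-\rho$ and write
\[
b_\delta=\imag\!\left(\frac{\rho}{G-i\rho\delta}\right)\cdot\frac{|\xi'|^2}{(\tau+\psi(\xi')-1)^2}.
\]
The decisive new feature is that $G$ stays bounded away from zero on the support: once $\epsilon_\circ$ is chosen small enough depending on $|1-\rho|$, we have $|F|\lesssim\epsilon_\circ<|1-\rho|/2$, hence $|G|\ge |1-\rho|/2$ and $G^2+(\rho\delta)^2\gtrsim 1$. The second factor is smooth with uniformly bounded $\xi'$-derivatives since $\tau+\psi-1\approx -1$. Running the Fa\`a di Bruno argument above with $G$ in place of $F$ (using $|\partial^\beta G|=|\partial^\beta F|\lesssim |\tau|\lesssim 1$) and applying the Leibniz rule with the smooth factor delivers the first inequality in \eqref{e:symbol2}; the second is immediate from the lower bound on $G^2+(\rho\delta)^2$. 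The main obstacle is the bookkeeping in the bound for $a_\delta$: one must verify that the $\delta$-gain produced by $\imag$ on $(F-i\delta)^{-(k+1)}$ absorbs the $(F^2+\delta^2)^{-1/2}$-factors generated by Fa\`a di Bruno, which hinges precisely on the relation $|\partial^\beta F|\lesssim |\tau|\lesssim |F|$ — reflecting that $F$ factors as $\tau$ times a quantity bounded away from $0$ near the support.
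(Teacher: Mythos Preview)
Your argument is correct. The paper's proof proceeds differently in organization: it differentiates $a_\delta$ directly as a real rational function, computing $\partial_{\xi'}^\alpha a_\delta$ explicitly for $|\alpha|=1,2,3$ and then showing by induction that
\[
\partial_{\xi'}^\alpha a_\delta(\tau,\xi')=\sum_{j=1}^{|\alpha|}\frac{\delta\,\tau^{2j}\,p_j(\nabla^{|\alpha|}\psi)}{\big(\tau^2(\tau+2\psi-2)^2+\delta^2\big)^{j+1}},
\]
from which \eqref{e:symbol} follows because $|\tau+2\psi-2|\approx 1$ forces $\tau^{2j}\lesssim\big(\tau^2(\tau+2\psi-2)^2+\delta^2\big)^j$. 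Your route---writing $a_\delta=\imag(F-i\delta)^{-1}$ and applying Fa\`a di Bruno to the scalar composition $z\mapsto (z-i\delta)^{-1}$ evaluated at $F$---is a clean repackaging of the same mechanism: both arguments hinge on the single relation $|\partial_{\xi'}^\beta F|\lesssim|\tau|\approx|F|$, which is exactly what makes the $\tau$-powers absorb the extra denominators. What your version buys is that the induction and the low-order computations are replaced by a single Fa\`a di Bruno step together with the elementary bound $|\imag(F-i\delta)^{-(k+1)}|\lesssim\delta(F^2+\delta^2)^{-(k+2)/2}$; what the paper's version buys is an explicit structural formula for the derivatives. For \eqref{e:symbol2} the two arguments coincide once one observes, as both do, that $|G|=|F+1-\rho|\gtrsim 1$ for $\epsilon_\circ$ small enough.
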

\begin{proof} The case $|\alpha|=0$ is trivial, so let us consider the case $|\alpha|\ge 1$. 

For $2\le j\le d$,
\[	\partial_j a(\tau,\xi')= \frac{-4\delta\tau^2(\tau+2\psi-2)\partial_j\psi}{(\tau^2(\tau+2\psi-2)^2+\delta^2)^2}. 	\]
Hence, the estimate \eqref{e:symbol} with $|\alpha|=1$ follows since $\epsilon_\circ$ is small and $|\tau+2\psi-2|\approx 1$. For $2\le j, k\le d$,
\[	\partial_{jk} a(\tau,\xi') = \frac{-4\delta \tau^2(2\partial_j\psi\partial_k\psi +(\tau+2\psi-2)\partial_{jk}\psi)}{(\tau^2(\tau+2\psi-2)^2+\delta^2)^2}  + \frac{32\delta \tau^4(\tau+2\psi-2)^2\partial_j\psi\partial_k\psi}{(\tau^2(\tau+2\psi-2)^2+\delta^2)^3},	\]
and the estimate \eqref{e:symbol} with $|\alpha|=2$ follows.  Next, for $2\le j,k,l \le d$, 
\begin{align*}
	&\partial_{jkl} a(\tau,\xi')
	= \frac{-4\delta\tau^2(2\p_{jk}\psi\p_l\psi+ 2\p_{kl}\psi\p_j\psi + 2\partial_{lj}\psi\partial_k\psi+ (\tau+2\psi-2)\p_{jkl}\psi)}{(\tau^2(\tau+2\psi-2)^2+\delta^2)^2} \\
	&\quad + \frac{32\delta\tau^4(\tau+2\psi-2)(6\p_j\psi\p_k\psi\p_l\psi +(\tau+2\psi-2)(\p_{jk}\psi\p_l\psi+\p_{kl}\psi\p_j\psi+\p_{lj}\psi\p_k\psi))}{(\tau^2(\tau+2\psi-2)^2+\delta^2)^3}\\
	&\quad - \frac{384\delta\tau^6(\tau+2\psi-2)^3\p_j\psi\p_k\psi\p_l\psi}{(\tau^2(\tau+2\psi-2)^2+\delta^2)^4},
\end{align*}
and this gives \eqref{e:symbol} for $|\alpha|=3$. Now, an easy induction argument shows that for any $|\alpha|\ge 1$,
\[	\p^\alpha_{\xi'}a(\tau,\xi')=\sum_{j=1}^{|\alpha|} \frac{\delta\tau^{2j}p_j(\nabla^{|\alpha|}\psi)}{(\tau^2(\tau+2\psi-2)^2+\delta^2)^{j+1}},	\]
where $\nabla^{k}\psi :=\{\p ^\beta \psi \colon |\beta|\le k\}$ for $k\in\mathbb N$ and $p_j$ is a polynomial with coefficients in $\mathbb Z\cup\{\tau\}$. Therefore, the estimate \eqref{e:symbol} follows.

The first inequality in \eqref{e:symbol2} can be proved in the same argument and we omit repetition. The second inequality in \eqref{e:symbol2} holds since $\rho\neq 1$ and 
\[	|\tau(\tau+2\psi(\xi')-2)+1-\rho|\ge|1-\rho|-2\epsilon_\circ (2+2\epsilon_\circ),	\] 
which is $\gtrsim 1$ if $\epsilon_\circ$ is small enough depending on $|1-\rho|$.
\end{proof}

We now invoke the stationary phase method (see \cite[Chapter VII]{Ho}) to obtain the  asymptotic for the function $x\mapsto I_\delta(x; \tau)$. Since $\nabla\psi(0)=0$ and $\nabla\psi(\xi')=\xi'+O(|\xi'|^3)$, by the inverse function theorem, there exists a unique diffeomorphism $g$ from the open ball $B_{d-1}(0, 1/2)$ onto an open set $U\subset\mathbb R^{d-1}$ such that $g(0)=0$ and 
\[	\eta+\nabla\psi(g(\eta))=0.\]
In fact, $g$ can be computed explicitly and we have
\begin{equation}\label{e:normal}
	g(\eta)=\frac{-\eta}{\sqrt{1+|\eta|^2}}	
\end{equation}
with $ U=B_{d-1}(0,1/\sqrt5)$. For each $\xi'\in\supp\chi$, let us denote by $K(\xi')$ the Gaussian curvature of the graph (of the unit sphere) $\mathcal G(\psi):=\{\xi\in\mathbb R^d \colon \xi_1=\psi(\xi')=1-\sqrt{1-|\xi'|^2},\, \xi'\in\supp \chi\}$ at point $(\psi(\xi'), \xi')$. Hence, $|K(\xi')|\equiv 1$. 

The following lemma is now an immediate consequence of \cite[Theorem 7.7.5 and Theorem 7.7.6]{Ho}. Also, see \cite[Lemma 2.7]{KL19}.
\begin{lem}\label{l:asymp}
	Let $0<\delta\le 1$, $-1\le \tau\le 1$, and let $I_\delta$ be as in the Proof of \eqref{e:low-im2}. If $|x_1|\ge 1/2$ and $2^5|x'|\le |x_1|$, then for every $N\in \mathbb N$ we have
\begin{align*}
	I_\delta(x;\tau)
	&=\frac{c_d}{\sqrt{|K(g(\frac{x'}{x_1}))|}} e^{i(x'\cdot g(\frac{x'}{x_1}) +x_1\psi\circ g(\frac {x'}{x_1}))} \sum_{j=0}^{N-1} \mathcal D_{j} a_\delta(\tau,\xi')\vert_{\xi'=g(\frac{x'}{x_1})}|x_1|^{-\frac{d-1}{2}-j} +\mathcal E_{\delta, N} (x;\tau) \\
	&= c_d e^{i(x_1-\sgn(x_1)|x|)} \sum_{j=0}^{N-1}\mathcal D_j a_\delta(\tau, \xi')\vert_{\xi'=-\sgn(x_1)\frac{x'}{|x|}} |x_1|^{-\frac{d-1}2-j} +\mathcal E_{\delta, N} (x;\tau),
\end{align*}
where $c_d$ is a constant depending only on $d$,  $\mathcal D_0a_\delta=a_\delta$ and, for each $j\ge1$, $\mathcal D_j$ is a differential operator in $\xi'$ of order $2j$ whose coefficients vary smoothly depending on $(\partial_{\xi'}^\alpha \psi)\circ g(\frac{x'}{x_1})$, $2\le |\alpha|\le 2j+2$. For $\mathcal E_{\delta, N}(x;\tau)$ we have the estimate
\begin{equation}\label{e:asymp-err}
	 |\mathcal E_{\delta, N}(x;\tau)|\lesssim |x_1|^{-N}\sum_{|\alpha|\le 2N} \sup_{(\tau,\xi')} |\partial_{\xi'}^\alpha a_\delta(\tau, \xi')| 
\end{equation}
with implicit constant independent of $\delta$.	
\end{lem}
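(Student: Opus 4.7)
The plan is to deduce Lemma \ref{l:asymp} directly from H\"ormander's stationary phase method (\cite[Theorems 7.7.5 and 7.7.6]{Ho}), treating $|x_1|$ as the large parameter and $a_\delta(\tau,\cdot)\chi(\cdot)$ as the amplitude. First I would rewrite the phase as $x'\cdot\xi' + x_1\psi(\xi') = x_1\Phi_y(\xi')$ with $\Phi_y(\xi'):=y\cdot\xi'+\psi(\xi')$ and $y:=x'/x_1$. The hypothesis $2^5|x'|\le|x_1|$ gives $|y|\le 1/32$, so $y$ lies in the domain $U=B_{d-1}(0,1/\sqrt5)$ of the inverse map $g$ from \eqref{e:normal}. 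Hence $\Phi_y$ has a unique critical point in $\supp\chi$, namely $\xi'_c:=g(y)=-\sgn(x_1)x'/|x|$, and a short calculation using the explicit form of $g$ yields the clean phase identity
\[	x_1\Phi_y(\xi'_c)=x_1\bigl(y\cdot g(y)+\psi(g(y))\bigr)=x_1-\sgn(x_1)|x|,	\]
which matches the oscillatory factor $e^{i(x_1-\sgn(x_1)|x|)}$ claimed in the second display of the lemma.

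Next I would verify non-degeneracy: since the graph of $\psi$ is (a piece of) the unit sphere, its Gaussian curvature satisfies $|K|\equiv1$, so $\mathrm{Hess}\,\psi(\xi'_c)$ is non-singular with smooth dependence on $\xi'_c$. Because the full phase Hessian is $x_1\,\mathrm{Hess}\,\psi(\xi'_c)$, the standard factor $|\det\Phi''(\xi'_c)|^{-1/2}$ contributes $|x_1|^{-(d-1)/2}$ together with the curvature normalization $1/\sqrt{|K(\xi'_c)|}$ appearing in the first line; this is the same reduction used in \cite[Lemma 2.7]{KL19}. Applying \cite[Theorem 7.7.5]{Ho} order by order then produces the asymptotic expansion in which $\mathcal D_j$ is the universal differential operator of order $2j$ with coefficients polynomial in $\p_{\xi'}^\alpha\psi$, $2\le|\alpha|\le 2j+2$, evaluated at $\xi'_c=g(x'/x_1)$, exactly as stated.

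The remainder bound \eqref{e:asymp-err} will come from \cite[Theorem 7.7.6]{Ho}, which controls the error by $|x_1|^{-N}$ times a $C^{2N}$-seminorm of the amplitude $a_\delta(\tau,\cdot)\chi$. Since the phase $\Phi_y$ and the cutoff $\chi$ are independent of $\delta$, the implicit constant in H\"ormander's bound is $\delta$-independent, and the $C^{2N}$-seminorm of $a_\delta(\tau,\cdot)\chi$ is bounded by $\sum_{|\alpha|\le 2N}\sup_{(\tau,\xi')}|\p_{\xi'}^\alpha a_\delta|$ up to a constant depending only on $\chi$. The only subtle step is precisely this uniformity in $\delta$: since $\|a_\delta\|_{C^{2N}}\to\infty$ as $\delta\to0$, one must keep track of the sup-norm of derivatives of $a_\delta$ explicitly on the right-hand side rather than treating $a_\delta$ as a fixed amplitude. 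The upstream bound \eqref{e:symbol}, established in Lemma \ref{l:symbol}, is what will make this remainder estimate useful in the preceding application to $Q_j$ and $R_N$.
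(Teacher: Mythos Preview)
Your proposal is correct and follows exactly the route the paper indicates: the paper gives no detailed proof of this lemma but simply records that it is an immediate consequence of \cite[Theorems~7.7.5 and 7.7.6]{Ho} together with the explicit formula \eqref{e:normal} for $g$, referring to \cite[Lemma~2.7]{KL19} for an analogous computation. Your verification of the critical phase value $x_1\bigl(y\cdot g(y)+\psi(g(y))\bigr)=x_1-\sgn(x_1)|x|$, the identification $g(x'/x_1)=-\sgn(x_1)\,x'/|x|$, and your remark that the only delicate point is the $\delta$-uniformity of the remainder bound (handled by keeping the $C^{2N}$-seminorm of $a_\delta$ explicit on the right of \eqref{e:asymp-err}) are exactly what is needed.
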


\section{Failure of Carleman estimate: Proof of Theorem \ref{t:Carleman}} 
In this section, we prove Theorem \ref{t:Carleman}. Scaling consideration shows that the estimate \eqref{e:car} is possible only if 
\begin{equation}\label{e:nec1}
	\frac1p-\frac1q=\frac2d.
\end{equation}
Hence, by homogeneity, we may assume that $|v|=1$ without loss of generality. Furthermore, it is sufficient to consider $v=e_1$ only, since the Lam\'e operator is rotationally symmetric and the estimate \eqref{e:car} is invariant under any rotation $x\to Rx$, $R\in\SO(d)$. Now we shall find another necessary condition for \eqref{e:car} (with $v=e_1$) which cannot be true under the condition \eqref{e:nec1}. 

Setting $f:=e^{v\cdot x}(-\Delta^\ast)e^{-v\cdot x}u$, $f=(f_1,\ldots,f_d)^t$ and $u=(u_1,\ldots,u_d)^t$, direct calculation shows that, for $1\le j\le d$,
\[	f_j=-\mu\left(\Delta-2v\cdot \nabla +|v|^2 \right) u_j -(\lambda+\mu)\sum_{k=1}^d  (\p_j\p_k-v_j\p_k-v_k\p_j+v_jv_k )u_k.	\] 
Taking the Fourier transform we get the following identity:
\[	\widehat f(\xi) = \left( \mu \big((\xi+iv)^t(\xi+iv)\big)I_d +(\lambda+\mu) (\xi+iv)(\xi+iv)^t \right)\widehat u(\xi).	\]
Setting $\eta:=\xi+iv$ we note that the matrix $M_\eta:=\mu(\eta^t\eta)I_d+(\lambda+\mu)\eta \eta^t$ is of the form \eqref{e:matrix} in the proof of Lemma \ref{l:resol} by replacing $\xi\to \eta$ and $z\to 0$. The inverse $M_\eta^{-1}$ can be computed without difficulty  by the same manner as in the proof of \eqref{e:mult}. Thus, we get
\begin{equation}\label{e:car-sys}
	\widehat{u_j}(\xi)
	=\frac{\widehat{f_j}(\xi)}{\mu(\xi+iv)^t(\xi+iv)}
	+\bigg(\frac1{\lambda+2\mu}-\frac1{\mu }\bigg)\sum_{k=1}^d \frac{(\xi_j+iv_j)(\xi_k+iv_k) \widehat{f_k}(\xi)}{((\xi+iv)^t(\xi+iv))^2}.
\end{equation}
Let us assume \eqref{e:car} and set $f_k=0$ whenever $k\neq 2$. Since $v=e_1$, we have
\[	\widehat{u_1}(\xi)=\bigg(\frac1{\lambda+2\mu}-\frac1{\mu }\bigg) \frac{(\xi_1+i)\xi_2\widehat{f_2}(\xi)}{(|\xi|^2-1+2i\xi_1)^2},	\]
and the inequality \eqref{e:car} implies 
\begin{equation}\label{e:car-lame}
	\bigg\|\mathcal F^{-1}\bigg(\frac{(\xi_1+i)\xi_2\widehat{h}(\xi)}{(|\xi|^2-1+2i\xi_1)^2}\bigg) \bigg\|_{L^q(\mathbb R^d)} \lesssim \|h\|_{L^p(\mathbb R^d)}.
\end{equation}

Now we show that \eqref{e:car-lame} is possible only if
\begin{equation}\label{e:nec2}
	\frac1p-\frac1q \ge \frac4{d+2}. 
\end{equation}

To see this let us fix nonnegative functions $\phi\in C_c^\infty((1/2, 2))$ and $\psi\in C_c^\infty([0,2))$ such that $\phi=1$ on $[2/3,3/2]$ and $\psi=1$ on $[0,1]$. Then for every $\delta>0$ small enough, let us define $h_\delta\in \mathcal S(\mathbb R^d)$ by
\[	\widehat{h_\delta}(\xi)=\phi\Big(\frac{\xi_1}{\delta}\Big) \phi\Big(\frac{\xi_2-1}{\delta}\Big) \psi\Big(\frac{|\bar\xi|}{\sqrt{\delta}}\Big),	\]
where  $\bar\xi:=(\xi_3,\ldots,\xi_d)\in\mathbb R^{d-2}$. Note that on the support of  $\widehat{h_\delta}$ we have 
\[	||\xi|^2-1+2i\xi_1|^2
	= (\xi_1^2+(\xi_2+1)(\xi_2-1)+|\bar\xi|^2)^2+4\xi_1^2 
	\le (4\delta^2 + 2\delta(2+2\delta)+4\delta)^2 +16\delta^2 \lesssim \delta^2.
\]
If we define the set $A_\delta$ by
\[	A_\delta:=\big\{x\in\mathbb R^d: |x_1|\le (100\delta)^{-1},~ |x_2|\le (100\delta)^{-1}, ~|\bar x|\le (100\sqrt\delta)^{-1}\big\},	\]
then the estimate \eqref{e:car-lame} with $h=h_\delta$  yields 
\[	|A_{\delta}|^{\frac 1q} \delta^{-2}|\supp \widehat{h_\delta}| \lesssim \|h_\delta\|_{L^p(\mathbb R^d)}	\]
with the implicit constant independent of $\delta\ll 1$. This implies $\delta^{\frac{d-2}{2}} \delta^{-\frac{d+2}{2q}}\lesssim \delta^\frac{d+2}2 \delta^{-\frac{d+2}{2p}}$ or, equivalently, $\delta^{-2+\frac{d+2}{2}(\frac1p-\frac1q)}\lesssim 1$ which implies \eqref{e:nec2}. We have shown that the estimate \eqref{e:car} implies both \eqref{e:nec1} and \eqref{e:nec2}. Combining these results we conclude that  \eqref{e:car} is impossible other than the case $(d,p,q)=(2,1,\infty)$.

It remains to show the failure of \eqref{e:car-lame}  with $(d,p,q)=(2,1,\infty)$.  Let $v=(1,0)$ and let $(f_1, f_2)=(0, h_\epsilon)$, where $h_\epsilon$ is the function defined in \eqref{e:def_h}. Then \eqref{e:car-sys} gives 
\[	(2\pi)^2 u_2(0) =\frac1\mu\int_{\mathbb R^2} \frac{\widehat {h_\epsilon}(\xi)}{|\xi|^2+2i\xi_1-1}d\xi + \left( \frac1{\lambda+2\mu}-\frac1\mu\right) \int_{\mathbb R^2}\frac{\xi_2^2\widehat{h_\epsilon}(\xi)}{(|\xi|^2+2i\xi_1-1)^2}d\xi.	\]
Since we want to disprove the $L^1$--$L^\infty$ bound and $\{ h_\epsilon \colon \epsilon>0\}$ is uniformly bounded in $L^1(\mathbb R^2)$ (see \eqref{e:unif_norm}), as in \emph{Proof of Proposition \ref{p:admissible}},  it is sufficient to show that $\real u_2(0)\to \infty$ as $\epsilon\to 0$.

We note that 
\begin{align*}
	(2\pi)^2\real u_2(0)
	&= \frac1\mu \int \frac{(\xi_1^2-1)[(|\xi|^2-1)^2 -(2\xi_1)^2]+8\xi_1^2(|\xi|^2-1)}{[(|\xi|^2-1)^2+(2\xi_1)^2]^2} \widehat{h_\epsilon}(\xi)d\xi \\
	& \quad + \frac{1}{\lambda+2\mu} \int \frac{\xi_2^2 [(|\xi|^2-1)^2-(2\xi_1)^2]}{[(|\xi|^2-1)^2+(2\xi_1)^2]^2} \widehat{h_\epsilon}(\xi) d\xi,
\end{align*}
which we rearrange as follows:
\begin{align*}
	&\frac1\mu \int \frac{\xi_1^2 [(|\xi|^2-1)^2-(2\xi_1)^2]}{[(|\xi|^2-1)^2+(2\xi_1)^2]^2} \widehat{h_\epsilon}(\xi) d\xi +\frac{1}{\lambda+2\mu} \int \frac{\xi_2^2 [(|\xi|^2-1)^2-(2\xi_1)^2]}{[(|\xi|^2-1)^2+(2\xi_1)^2]^2} \widehat{h_\epsilon}(\xi) d\xi \\
	& + \frac1\mu \int \frac{ 8\xi_1^2(|\xi|^2-1) - [(|\xi|^2-1)^2-(2\xi_1)^2]}{[(|\xi|^2-1)^2+(2\xi_1)^2]^2} \widehat{h_\epsilon}(\xi) d\xi.
\end{align*}
Easy computations show that for $|\xi|\ge4$,
\begin{gather}
	|\xi|^4/2 \le (|\xi|^2-1)^2-(2\xi_1)^2 \le |\xi|^4, 	\label{e:est1} \\
	|\xi|^4/2 \le (|\xi|^2-1)^2 + (2\xi_1)^2 \le 2|\xi|^4,		\label{e:est2} 
\end{gather}
and
\begin{equation} \label{e:est3}
	0\le 8\xi_1^2(|\xi|^2-1) \le 8 |\xi|^4.
\end{equation}
Let $m=\min\{\frac1\mu, \frac1{\lambda+2\mu}\}$. Since $\widehat{h_\epsilon}$ is non-negative and satisfies  \eqref{e:supp} and \eqref{e:supp1}, it follows from the triangle inequality and the estimates \eqref{e:est1}, \eqref{e:est2}, and \eqref{e:est3} that if  $\epsilon\le 1/8$ then
\begin{align*}
	|(2\pi)^2 \real u_2(0)|
	&\ge m \int \frac{|\xi|^2 [(|\xi|^2-1)^2-(2\xi_1)^2]}{[(|\xi|^2-1)^2+(2\xi_1)^2]^2} \widehat{h_\epsilon}(\xi)d\xi \\
	& \quad - \frac1\mu  \int \frac{|8\xi_1^2(|\xi|^2-1)-[(|\xi|^2-1)^2-(2\xi_1)^2]|}{[(|\xi|^2-1)^2+(2\xi_1)^2]^2} \widehat{h_\epsilon}(\xi) d\xi \\
	&\gtrsim \int_{1/\epsilon\le |\xi|\le 1/2\epsilon^2} |\xi|^{-2} d\xi - \int_{1/2\epsilon\le |\xi|\le 1/\epsilon^2} |\xi|^{-4} d\xi \\
	&\gtrsim \log \frac1\epsilon - \epsilon^2,
\end{align*}
which diverges to infinity as $\epsilon \to 0$.

\end{document}